\def\BibTeX{{\rm B\kern-.05em{\sc i\kern-.025em b}\kern-.08em T\kern-.1667em\lower.7ex\hbox{E}\kern-.125emX}}
\DeclareMathOperator*{\argmin}{arg\,min}
\newtheorem{theorem}{Theorem}
\newtheorem{assumption}{Assumption}
\newtheorem{example}{Example}
\newtheorem{lemma}{Lemma}
\newtheorem{corollary}{Corollary}
\newtheorem{remark}{Remark}
\newtheorem{definition}{Definition}
\begin{document}
\title{Robust Model Predictive Control for \\ Nonlinear Systems Using Convex Restriction}
\author{Dongchan Lee, Konstantin Turitsyn, and Jean-Jacques Slotine
\thanks{This work was supported in part by the U.S. Department of Energy Office of Electricity as part of the DOE Grid Modernization Initiative and in part by the National Science Foundation Energy, Power, Control and Networks Award 1809314.}
\thanks{D. Lee and J. Slotine are with the Department of Mechanical Engineering, Massachusetts Institute of Technology, Cambridge, MA 02139, USA (email: dclee@mit.edu, jjs@mit.edu).}
\thanks{K. Turitsyn is with D. E. Shaw Group, New York, NY 10036 (email: turitsyn@mit.edu).}
}

\maketitle

\begin{abstract}
We present an algorithm for robust model predictive control with consideration of uncertainty and safety constraints. Our framework considers a nonlinear dynamical system subject to disturbances from an unknown but bounded uncertainty set. By viewing the system as a fixed point of an operator acting over trajectories, we propose a convex condition on control actions that guarantee safety against the uncertainty set. The proposed condition guarantees that all realizations of the state trajectories satisfy safety constraints. Our algorithm solves a sequence of convex quadratic constrained optimization problems of size $n\cdot N$, where $n$ is the number of states, and $N$ is the prediction horizon in the model predictive control problem. Compared to existing methods, our approach solves convex problems while guaranteeing that all realizations of uncertainty set do not violate safety constraints.
Moreover, we consider the implicit time-discretization of system dynamics to increase the prediction horizon and enhance computational accuracy. Numerical simulations for vehicle navigation demonstrate the effectiveness of our approach.
\end{abstract}

\begin{IEEEkeywords}
Model Predictive Control, Convex Restriction, Robust Optimization
\end{IEEEkeywords}

\IEEEpeerreviewmaketitle

\section{Introduction}

Model Predictive Control (MPC) has remained a popular control strategy due to its ability to incorporate complex dynamical systems and safety constraints.
One of MPC's advantages is its elegant formulation for considering safety constraints in safety-critical applications such as navigation, robotics, power systems, and chemical plant regulation. 
Advances in sensing and computation provide new opportunities for MPC formulation to tackle a broader range of systems, where mathematical models can be readily estimated using data. 
However, uncertainties in models or sensors can cause the system to deviate from the planned trajectory, and there is a need to consider robustness in safety-critical applications.

One of the natural ways to guarantee robustness is to construct a tube or a funnel around a nominal trajectory that contains all possible realizations of the state trajectory under disturbances \cite{Camacho2007,kouvaritakis2016model,rawlings2017model,rmpc_morari}.
An example is shown in Figure \ref{fig:intro_plot}, where the grey tube around the robust trajectory represents the bound on possible realizations of trajectories under uncertainty.
If the performance specifications are met within the tube, the controller is verified to be robust against a range of model variations and noise.
One way to achieve tube-based robust MPC is by enforcing stability and time-invariance to the tube, but many of the previous work is limited to linear systems \cite{Kothare1996, Langson2004, Mayne2005a, Mayne2009}. 
Consideration of the nonlinear dynamical system in the presence of uncertainty and safety constraints has remained a challenge.

\begin{figure}[!t]
	\centering 	\includegraphics[width=3.4in]{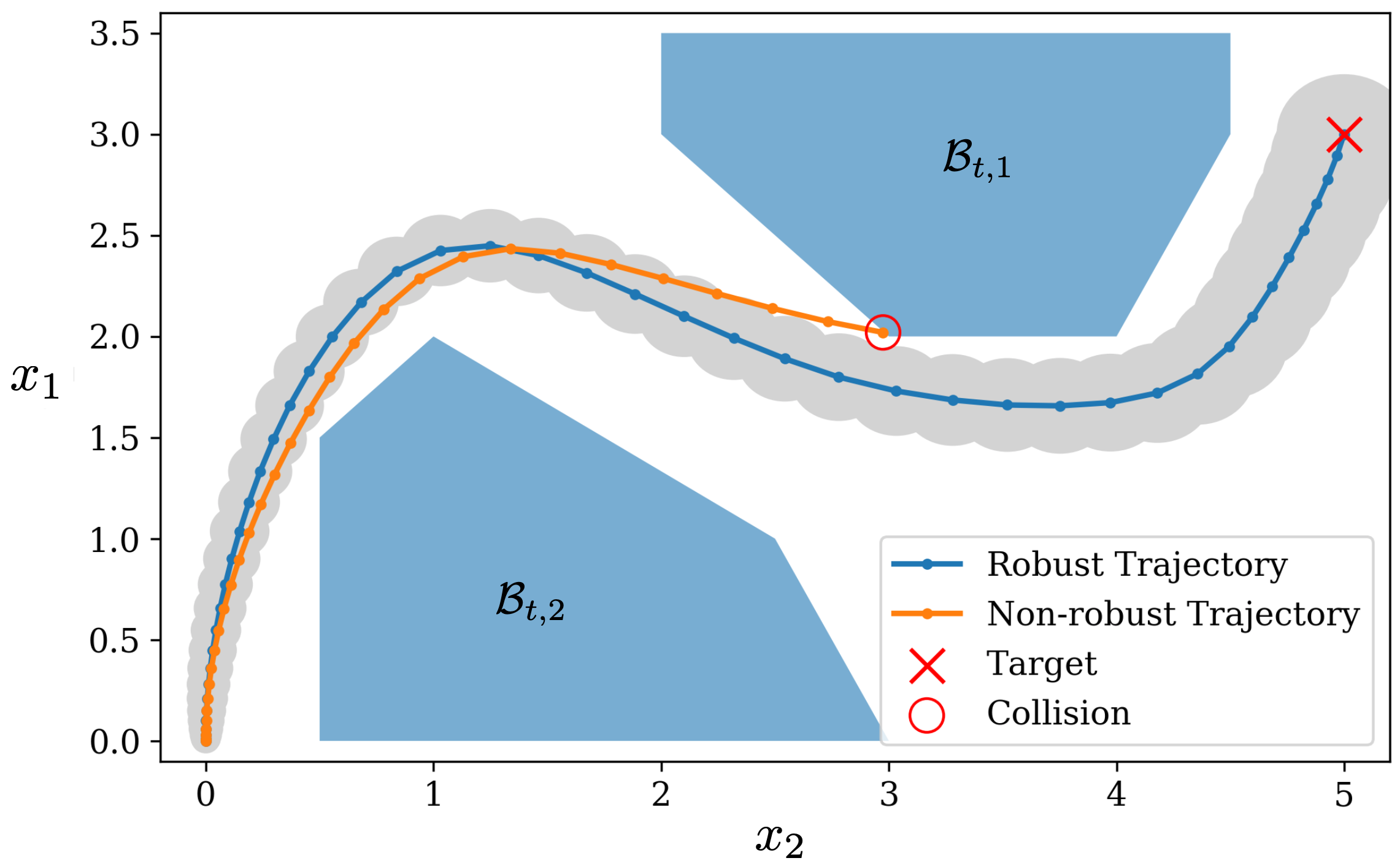}
	\caption{Examples of a non-robust and a robust trajectory are shown. Two blocks, $\mathcal{B}_{t,1}$ and $\mathcal{B}_{t,2}$, are the obstacles that the agent needs to avoid. The non-robust trajectory collides at the red circle while the robust trajectory is able to reach the target point without collision.}
	\label{fig:intro_plot}
\end{figure}

Reachability analysis has been used for the robustness verification of nonlinear systems. The approach computes the reachable set by propagating a set of possible states forward in time using convex relaxation \cite{Lee2017} or using a polynomial approximation of the dynamics \cite{dang12,sassi12}. Interval analysis is also closely related and has been used to bound state estimation error \cite{moore09,jaulin02}.
Another line of related work is a set-theoretic approach \cite{villanueva2015unified, chachuat2015set} and min-max differential equations \cite{villanueva2017robust}.
Alternatively, the backward reachable sets can be computed by Hamilton-Jacobi reachability \cite{Mitchell2005,Chen2018}. However, considering the optimality of control action is not straightforward with reachability analysis.

In order to numerically compute robustness margins, the Sum of Squares (SOS) optimization is often used for polynomial dynamical systems \cite{Parrilo2000}. SOS optimization can be used to search Lyapunov functions \cite{Tobenkin2011,Majumdar2013,Majumdar2017} or contraction metrics \cite{Aylward2008,Singh2017} to verify robustness.
The objective of these optimization problems is to maximize the time-invariant tube using feedback controllers. However, these approaches are restricted to polynomial dynamics and do not consider the optimality of controllers.
In \cite{Majumdar2017,Singh2017,Herbert2018}, the planning of the nominal trajectory to avoid obstacles was considered, but the optimality of the control action was not under consideration.

In this paper, we propose sequential convex restriction for solving the constrained robust MPC problem. The framework can be viewed as an analogous counterpart of Picard-Linderl\"{o}f Theorem for a discrete-time dynamical system. We focus on showing the existence of state trajectory that satisfies safety constraints for time-discretized system dynamics using convex restriction.
The convex restriction was proposed to provide a sufficient condition for solving a system of nonlinear equations subject to inequality constraints \cite{Lee2019b}. The work in \cite{Lee2019b} focused on the applications in the steady-state analysis of electric power grids. These conditions were extended in \cite{Lee2019a} to consider minimization of generation cost while satisfying the nonlinear model of power grids.

The framework is applied to MPC problems by deriving a convex sufficient condition over the control actions such that the resulting state trajectory is robust against the given bounded disturbances. 
Derivation of the sufficient condition involves representing the system with a nonlinear feedback form bounded by an envelope. 
Compared to SOS optimization, our algorithm treats general nonlinear functions and solves convex quadratically constrained quadratic programming (QCQP) problem. QCQP can be solved much efficiently than semidefinite programming, and it can solve the MPC problems with a longer horizon.

The main contributions of the paper can be summarized as follows.
\begin{enumerate}
  \item We provide a convex sufficient condition for robust control actions for the MPC problem and optimize the control and state trajectory using the proposed condition.
  \item We develop methods for both explicit and implicit time-discretization schemes in the MPC problem. We exploit the layered structure of the MPC constraints and Lipschit constant of the dynamics to obtain convex conditions for robustness. 
  \item We demonstrate the effectiveness of the algorithm for a ground vehicle navigation problem with obstacle avoidance. The example includes dynamics that are jointly nonlinear with respect to the state and control variables.
\end{enumerate}

The rest of the paper is organized as follows. In Section II, we present the general formulation of the problem and provide the uncertainty set and the model of the system. Section III provides a guideline for constructing convex restrictions with a provable robust guarantee. Section IV shows the procedures for sequential convex restriction, which uses the derived convex restriction condition. Section V applies the proposed method to a navigation problem. Section VI provides a conclusion.

\section{Problem Statement and Preliminaries}
In this section, we describe the dynamics and the uncertainty model used in the MPC problem. The main advantage of our approach is that we consider general nonlinearity in dynamics with uncertainty as well as nonconvexity in safety constraints.

\subsection{System Dynamics in Continuous time}
We consider a nonlinear dynamical system subject to disturbances and the initial condition $x(0)=w_\textrm{init}$:
\begin{equation}
\frac{d}{dt}x(t)=f(x(t),u(t),w(t)),
\label{eqn:dyn}\end{equation}
where $x(t)\in\mathbb{R}^n$ , $u(t)\in\mathbb{R}^m$, and $w(t)\in\mathbb{R}^r$ represent the state, control, and uncertain variables. The initial condition, $w_\textrm{init}\in\mathbb{R}^n$, and the dynamics parameter, $w(t)$, are subject to uncertainty, which includes external disturbances and model errors. The uncertain variables are assumed to be unknown but bounded by some uncertainty set and will be discussed later. 
The nonlinear dynamics, $f:(\mathbb{R}^n,\mathbb{R}^m,\mathbb{R}^r)\rightarrow \mathbb{R}^n$, is assumed to be Lipschitz continuous according to the following definition.

\begin{assumption}
The nonlinear map $f(x,u,w)$ is uniformly Lipschitz continuous in $x$ and coninuous in $u$ and $w$.
\label{assump:Lipschitz}
\end{assumption}

Given that Assumption \ref{assump:Lipschitz} holds, Picard-Linderl\"{o}f Theorem provides existence and uniqueness of trajectory for the initial value problem in Equation \eqref{eqn:dyn}.

\begin{theorem} (Picard-Linderl\"{o}f Theorem)
Suppose that $f(x,u(t),w(t))$ is uniformly Lipschitz continuous in $x$ and $t$ for some continuous signal $u(t)$ and $w(t)$. Then, there exists a unique solution $x(t)$ to the initial value problem in \eqref{eqn:dyn} for the interval $[t_0-\varepsilon, t_0+\varepsilon]$ with some constant $\varepsilon>0$.
\label{thm:Picard}
\end{theorem}

Picard-Linderl\"{o}f Theorem can be proved by Banach's fixed point theorem to Picard's iteration \cite{Coddington1984}:
\begin{equation}
T_{u,w}[\mathbf{x}](t):=x_0+\int_{t_0}^t f(x(\tau),u(\tau),w(\tau))d\tau.
\label{eqn:Picard}\end{equation}
Our approach will later show an implicitly relation to Picard-Linderl\"{o}f Theorem in that we show safety of our solution by showing the existence of a fixed-point to an operator that acts over trajectories.

\subsection{Time discretization of Continuous Dynamical Systems}
To obtain a numerical solution for the differential equation in~\eqref{eqn:dyn}, the model is converted to a discrete-time dynamical system with a fixed time step $h\in\mathbb{R}$. The explicit and implicit time-discretization schemes are considered in this paper.
\subsubsection{Explicit time-discretization (Forward Euler)}
The explicit scheme approximates the differential equation by
\begin{equation}
x_{t+1}=x_t+h\cdot f(x_t,u_{t+1},w_{t+1}),
\label{eqn:dyn_explicit}\end{equation}
where $x_{t+1}$ can be computed explicitly given $x_t$.

\subsubsection{Implicit time-discretization (Backward Euler)}
The implicit scheme approximates the diffential equation by
\begin{equation}
x_{t+1}=x_t+h\cdot f(x_{t+1},u_{t+1},w_{t+1}),
\label{eqn:dyn_implicit}\end{equation}
where $x_{t+1}$ can be obtained by solving a system of nonlinear equations given $x_t$. Implicit time-discretization schemes admits more accurate solution compared to explicit scheme, and may use larger step size $h$ to predict longer horizon.

\subsection{Modeling Safety Constraints}
The state of the system is constrained by safety constraints, which forms a general nonconvex set denoted by $\mathcal{X}_t$. As an example, these constraints could include physical obstacles that the navigating agent must avoid and safety limits that the system and controller need to respect. We represent the safety constraints in the form of avoiding $s$ convex obstacles at time $t$. The state is declared feasible or safe if $x_t\in\mathcal{X}_t$ or equivalently,
\begin{equation}
x_t\notin\mathcal{B}_{t,(i)},\quad i=1,\ldots,s,
\label{eqn:obstacle}
\end{equation}
where $\mathcal{B}_{t,(i)}\subseteq\mathbb{R}^n,\, i=1,\ldots,s$ are convex sets representing the obstacles. 
The subscript $t$ denotes that the obstacle may be time-dependent to represent moving obstacles. The subscript $(i)$ denotes the index of the obstacle.
The safety constraint can be represented as an intersection of the complement of convex obstacles such that
\begin{equation}
\mathcal{X}_t=\left(\bigcup_{i=1}^s\mathcal{B}_{t,(i)}\right)^C=\bigcap_{i=1}^s\mathcal{B}_{t,(i)}^C,
\end{equation}
where $\mathcal{B}^C_{t,(i)}$ denotes the complement of the set $\mathcal{B}_{t,(i)}$.
The safety constraints are assumed to be represented with a finite number of obstacles. This representation includes the majority of practically relevant applications such as the ground vehicle navigation problem.

\subsection{Modeling Uncertainty Set}\label{sec:uncertainty}
There are two main sources of uncertainty considered in this paper.
\begin{itemize}
\item Uncertainty in initial condition: Initial condition for the state is denoted by $w_\textrm{init}$ and is assumed to be unknown but bounded by $\mathcal{W}_\textrm{init}$.
\item Uncertainty in dynamics: Disturbances are denoted by $w_t$ and are assumed to be unknown but bounded by $\mathcal{W}_t$ for time step, $t$.
\end{itemize}
The uncertainty sets are modeled by convex sets with margin, $\gamma$, representing the size of the uncertainty set. We provide two types of uncertainty sets as examples:
\begin{equation}\begin{aligned}
&\mathcal{W}^Q(\gamma)=\{w\mid (w-w^{(0)})^T \Sigma^{-1}(w-w^{(0)}) \leq \gamma^2\}, \\ 
&\mathcal{W}^I(\gamma)=\{w\mid \lvert w_i-w_i^{(0)}\rvert\leq \gamma_i,\; i=1,\ldots,r \},
\end{aligned}\label{eqn:uncertainty}\end{equation}
where the superscripts $Q$ and $I$ denote ellipsoidal and interval uncertainty sets, respectively.
The ellipsoidal uncertainty set, $\mathcal{W}^Q(\gamma)$, has its center at the nominal value, $w^{(0)}\in\mathbb{R}^r$, with variance and radius of $\Sigma\in\mathbb{R}^{r\times r}$ and $\gamma\in\mathbb{R}$, respectively. 
The interval uncertainty set, $\mathcal{W}^I(\gamma)$, is upper and lower bounded by $w_i^{(0)}+\gamma_i$ and $w_i^{(0)}-\gamma_i$ for each element of $w_i$. 
It is possible to extend the analysis to other uncertainty sets, but we limit ourselves to these uncertainty sets for simplicity of presentation.
Given the uncertainty set, the robustness of the trajectory is defined as the state trajectory satisfying the safety constraints under the uncertainty.

\begin{definition}
The control action, $u_t\in\mathcal{U}_t$ for $t=1,\ldots,N$ is a \textit{robust} control trajectory if the state trajectory satisfies the safety constraints for all realizations of uncertain variables. That is, $\forall w_\textrm{init}\in\mathcal{W}_\textrm{init}$ and $\forall w_t\in\mathcal{W}_t$, $x_t\in\mathcal{X}_t$ for all $t$.
\label{def:robust}\end{definition}

Given the definition of a robust control trajectory, the MPC problem aims to minimize the cost associated with the state and control effort.

\subsection{Constrained Robust Model Predictive Control Problem}

In this section, we provide an overview of the MPC problem with safety and robustness constraints.
The robust MPC problem solves the following optimization problem over a finite horizon of $N$ time steps:

\begin{equation}
    \begin{aligned}
        \underset{u,x}{\text{minimize}} \hskip 1em & c(x,u) \\
        \text{subject to} \hskip 1em & \forall w_t\in\mathcal{W}_t,\ \forall w_\textrm{init}\in\mathcal{W}_\textrm{init},  \\
        & x_0=w_\textrm{init}, \\
        & \text{for} \ \ t=0,...,N-1, \\
        & \hskip 2em x_{t+1}\in\mathcal{X}_{t+1},\ u_{t}\in\mathcal{U}_{t}, \\
        \text{(if explicit)} \hskip -1em & \hskip 2em x_{t+1}=x_t+h\cdot (f(x_t,u_t)+w_t), \\
        \text{(if implicit)} \hskip -1em & \hskip 2em x_{t+1}=x_t+h\cdot (f(x_{t+1},u_t)+w_t).
    \end{aligned}
    \label{eqn:CRMPC}
\end{equation}
The objective function considers the worst-case cost under uncertainty, defined by
\begin{equation}\begin{aligned}
c(x,u)&=\max_{w_t\in\mathcal{W}_t}\left(\sum_{t=0}^{N-1}\left[c_{x,t}(x_t)+c_{u,t}(u_t)\right]+c_{x,N}(x_N)\right),
\end{aligned}\label{eqn:worst_cost}\end{equation}
where $c_{x,t}:\mathbb{R}^n\rightarrow\mathbb{R}$ and $c_{u,t}:\mathbb{R}^m\rightarrow\mathbb{R}$ for $t=1,\ldots,N$ are convex cost functions for states and control actions, respectively. 
The implementation of MPC follows the receding horizon fashion where the first control action of the solution from \eqref{eqn:CRMPC} is applied to the plant, and the remaining computed control actions are discarded. This process is repeated with the new system state set to the initial condition. 
In order to solve the resulting MPC problem, we will propose a local search method for minimizing the cost function while ensuring robustness.

\begin{assumption}
There is some given nominal control and nominal state trajectories, $x_t^{(0)}$ and $u_t^{(0)}$ for $t=1,\ldots,N$, that can be used as the initial condition for problem in \eqref{eqn:CRMPC}.
\label{assump:initial_trj}
\end{assumption}

While Assumption \ref{assump:initial_trj} is often satisfied by using the previous trajectory solution as the initial condition, we have explicitly stated it as an assumption to highlight that our approach highly depends on the initial condition.
The initial trajectory does not necessarily have to be robust, and our solution will enforce robustness while minimizing the associated cost.
These initial state trajectory can be obtained by initializing the control action and simulating the system according to \eqref{eqn:dyn_explicit} or \eqref{eqn:dyn_implicit} according to the choice of discretization scheme.
We denote the state's nominal value, control, and uncertain variable by the superscript $(0)$. Our proposed algorithm will iteratively update the control actions such that it is robustly feasible while decreasing the objective function. The nominal values will be updated later in the algorithm, where the number in the superscript denotes the iteration number.

\section{Convex Restriction of \\ Robust Feasible Control Actions}

We derive a sufficient condition for robust control trajectory using convex restriction in this section.
The derivation involves several steps where the system dynamics are represented as a fixed-point equation. The second step involves enforcing convexity by bounding the nonlinearity with envelopes and using Brouwer's fixed point theorem to verify the existence of the solution.

\subsection{Dynamics as a System of Nonlinear Equations}
We consider the \textit{system trajectories} as a collection of system variables over the prediction horizon $N$. The state, control and uncertainty trajectories will be denoted by $\mathbf{x}\in\mathbb{R}^{(n+1)\cdot N}$, $\mathbf{u}\in\mathbb{R}^{m\cdot N}$, and $\mathbf{w}\in\mathbb{R}^{n+r\cdot N}$ where
\begin{displaymath}
\mathbf{x}=\begin{bmatrix}x_0 \\ \vdots \\ x_N \end{bmatrix}, \ \ 
\mathbf{u}=\begin{bmatrix}u_1 \\ \vdots \\ u_N \end{bmatrix}, \ \ 
\mathbf{w}=\begin{bmatrix}w_\textrm{init} \\ w_1 \\ \vdots \\ w_N \end{bmatrix}.
\end{displaymath}
The uncertain variable, $\mathbf{w}$, includes both the uncertain initial condition, $w_\textrm{init}$, and the uncertain dynamics $w_0,\ldots,w_{N-1}$. 
We will write that $\mathbf{w}\in\mathcal{W}(\gamma)$ if $w_\textrm{init}\in\mathcal{W}_\textrm{init}(\gamma)$ and $w_t\in\mathcal{W}_t(\gamma)$ for $t=1,\ldots,N$.
The cardinality of $\mathbf{x}$ will be denoted by $\lvert\mathbf{x}\rvert$ so that $\mathbf{x}\in\mathbb{R}^{\lvert\mathbf{x}\rvert}$.

The dynamic equations of $N$ time steps can be cast as a system of nonlinear equations by concatenating the equality constraints in \eqref{eqn:CRMPC}. 
This formulation converts the dynamic equation in \eqref{eqn:dyn} to finding a zero of a set of algebraic equations $F(\mathbf{x},\mathbf{u},\mathbf{w})=0$ where $F:(\mathbb{R}^{\lvert\mathbf{x}\rvert},\mathbb{R}^{\lvert\mathbf{u}\rvert},\mathbb{R}^{\lvert\mathbf{w}\rvert})\rightarrow\mathbb{R}^{\lvert\mathbf{x}\rvert}$ defines the dynamics of the system. For example, the dynamic equation in \eqref{eqn:dyn_explicit} can be rearranged to $x_t+h\cdot f(x_t,u_{t+1},w_{t+1})-x_{t+1}=0$, and the initial can be written as $w_\textrm{init}-x_0=0$. Then, the set of equations for explicit time-discretization scheme is given by
\begin{equation}\begin{aligned}
&F_\textrm{explicit}(\mathbf{x},\mathbf{u},\mathbf{w})= \\ 
&\hskip5em
\begin{bmatrix} w_\textrm{init}-x_0 \\
x_0+h\cdot f(x_0,u_1,w_1)-x_1 \\
\vdots \\
x_{N-1}+h\cdot f(x_{N-1},u_{N},w_{N})-x_N
\end{bmatrix},
\end{aligned}\label{eqn:F_dyn_explicit}\end{equation}
and the equations for implicit scheme is given by
\begin{equation}\begin{aligned}
&F_\textrm{Implicit}(\mathbf{x},\mathbf{u},\mathbf{w})= \\
&\hskip5em\begin{bmatrix}
w_\textrm{init}-x_0 \\ 
x_0+h\cdot f(x_1,u_1,w_1)-x_1 \\ 
\vdots \\
x_{N-1}+h\cdot f(x_N,u_N,w_N)-x_N
\end{bmatrix}.
\label{eqn:F_dyn_implicit}\end{aligned}\end{equation}

The number of equations in $F(\mathbf{x},\mathbf{u},\mathbf{w})$ is the same as the number of unknown variables in $\mathbf{x}$, which is the state trajectory given the control and uncertain variables.

\subsection{Fixed-Point Analysis of Discrete-time Dynamical Systems}
Consider the nonlinear equation $F(\mathbf{x},\mathbf{u},\mathbf{w})=0$ defined in \eqref{eqn:F_dyn_explicit} or \eqref{eqn:F_dyn_implicit} depending on the choice of time-discretization scheme. Let $\frac{\partial F}{\partial \mathbf{x}}\bigm|_{(0)}=\frac{\partial F}{\partial \mathbf{x}}\bigm|_{\mathbf{x}=\mathbf{x}^{(0)},\mathbf{u}=\mathbf{u}^{(0)},\mathbf{w}=\mathbf{w}^{(0)}}$ denote the Jacobian of $F$ with respect to $\mathbf{x}$ evaluated at the nominal system trajectory. Note that the variables $\mathbf{x}^{(0)}$, $\mathbf{u}^{(0)}$, and $\mathbf{w}^{(0)}$ denote the nominal trajectories, which was introduced in Assumption \ref{assump:initial_trj}.
The dynamic equation, $F(\mathbf{x},\mathbf{u},\mathbf{w})=0$, can be written as the following fixed-point equation:
\begin{equation}
\mathbf{x}=-\left(\frac{\partial F}{\partial \mathbf{x}}\biggm|_{(0)}^{-1}\right)\boldsymbol{g}(\mathbf{x},\mathbf{u},\mathbf{w})-\left(\frac{\partial F}{\partial \mathbf{x}}\biggm|_{(0)}^{-1}\cdot\;\frac{\partial F}{\partial \mathbf{w}}\biggm|_{(0)}\right)\mathbf{w},
\label{eqn:fxdpt_form} \end{equation}
where $\boldsymbol{g}:(\mathbb{R}^{\lvert\mathbf{x}\rvert},\mathbb{R}^{\lvert\mathbf{u}\rvert},\mathbb{R}^{\lvert\mathbf{w}\rvert})\rightarrow \mathbb{R}^{\lvert\mathbf{x}\rvert}$ is the residual function:
\begin{equation}
\boldsymbol{g}(\mathbf{x},\mathbf{u},\mathbf{w})=F(\mathbf{x},\mathbf{u},\mathbf{w})-\left(\frac{\partial F}{\partial \mathbf{x}}\biggm|_{(0)}\right)\mathbf{x} - \left(\frac{\partial F}{\partial \mathbf{w}}\biggm|_{(0)}\right)\mathbf{w}.
\label{eqn:g_residual}\end{equation}
Closed-form expressions for the Jacobian and the residual function are provided in Appendix \ref{appdx:Jacobian} and \ref{apdx:residual}.

\begin{lemma}
The inverse of the Jacobian, $\frac{\partial F}{\partial \mathbf{x}}\bigm|_{(0)}^{-1}$, exists for both explicit and implicit time-discretization schemes if the step size, $h$, is sufficiently small.
Then, the set of dynamic equations, $F(\mathbf{x},\mathbf{u},\mathbf{w})=0$, is satisfied if and only if \eqref{eqn:fxdpt_form} is satisfied.
\label{lemma:fixed_point}\end{lemma}
\begin{proof}
A closed-form representation of the Jacobian inverse is provided in Lemma \ref{lemma:Jac_explicit} and \ref{lemma:Jac_implicit} in Appendix \ref{appdx:Jacobian}. Substituting the residual function in Equation \eqref{eqn:g_residual} to \eqref{eqn:fxdpt_form}, $0=-\frac{\partial F}{\partial \mathbf{x}}\bigm|_{(0)}^{-1}F(\mathbf{x},\mathbf{u},\mathbf{w})$. Since the Jacobian is invertible, $F(\mathbf{x},\mathbf{u},\mathbf{w})=0$.
\end{proof}

The following Theorem uses Brouwer's fixed point theorem to show the existence of the state trajectory by showing that the Equation \eqref{eqn:fxdpt_form} has a fixed point.
First, the matrices $K\in\mathbb{R}^{\lvert\mathbf{x}\rvert \times\lvert\mathbf{x}\rvert}$ and $R\in\mathbb{R}^{\lvert\mathbf{x}\rvert \times\lvert\mathbf{w}\rvert }$ are defined by
\begin{equation}
K=-\frac{\partial F}{\partial \mathbf{x}}\biggm|_{(0)}^{-1},\hskip1.5em
R=-\frac{\partial F}{\partial \mathbf{x}}\biggm|_{(0)}^{-1}\cdot\;\frac{\partial F}{\partial \mathbf{w}}\biggm|_{(0)}.
\label{eqn:K}\end{equation}

\begin{theorem}
Suppose that $T_{\mathbf{u},\mathbf{w}}:\mathcal{P}\rightarrow\mathcal{P}$ is a continuous map where $\mathcal{P}\subseteq\mathbb{R}^n$ is a nonempty, compact, convex set and
\begin{equation}
T_{\mathbf{u},\mathbf{w}}[\mathbf{x}]=K\boldsymbol{g}(\mathbf{x},\mathbf{u},\mathbf{w})+R\mathbf{w}.
\label{eqn:Brouwer}
\end{equation}
Then, there exists some $\mathbf{x}\in\mathcal{P}$ such that $F(\mathbf{x},\mathbf{u},\mathbf{w})=0$.
\label{thm:Brouwer}
\end{theorem}

The proof of Theorem \ref{thm:Brouwer} is provided in Appendix \ref{apdx:Brouwer}. The proof is analogous to Theorem \ref{thm:Picard} in the sense that the dynamical equation was studied with a fixed-point operator that acts over trajectories. 
Theorem \ref{thm:Picard} uses Picard's iteration for a fixed-point equation and Banach's fixed point theorem for proving the existence of a solution. The fixed-point equation used in Theorem \ref{thm:Brouwer} corresponds to Newton's iteration, and Brouwer's fixed point theorem is used to show the existence of trajectory.
Unlike conventional stability analysis in control, we focus on the existence of trajectory and its bounds under uncertain dynamical systems.
We will establish the existence of the state trajectory that satisfies safety constraints using Theorem \ref{thm:Brouwer} by showing that there exists the set $\mathcal{P}$ with the self-mapping property (i.e., $\forall x\in\mathcal{P}$, $T[\mathbf{x}]\in\mathcal{P}$).



\subsection{Tube around the State Trajectories}
Given the control action, the state trajectory depends on the realized value of the uncertain variable. Since the uncertainty lies in a bounded set, the state variables also form a bounded set over the finite horizon. We will denote the outer approximation of state trajectoriesd by
\begin{equation}
\mathcal{P}(\mathbf{x}^u, \mathbf{x}^\ell)=\left\{\mathbf{x}\Bigm| \mathbf{x}^\ell\leq\mathbf{x} \leq \mathbf{x}^u\right\},
\label{eqn:P_tube}\end{equation}
where $\mathbf{x}^u\in\mathbb{R}^{\lvert\mathbf{x}\rvert}$ and $\mathbf{x}^\ell\in\mathbb{R}^{\lvert\mathbf{x}\rvert}$ provide the upper and lower bounds on the state trajectory. 
Given that $\mathbf{x}^u\geq \mathbf{x}^\ell$, the tube $\mathcal{P}(\mathbf{x}^u, \mathbf{x}^\ell)$ is a non-empty, compact, and convex set that is parametrized by $\mathbf{x}^u,\;\mathbf{x}^\ell\in\mathbb{R}^{\vert\mathbf{x}\rvert}$.
This set will be used as a candidate for the self-mapping set in Theorem \ref{thm:Brouwer}, and the parameter $\mathbf{x}^u$ and $\mathbf{x}^\ell$ will be searched via convex optimization. We will refer the set, $\mathcal{P}(\mathbf{x}^u, \mathbf{x}^\ell)$, to a self-mapping tube.

\subsection{Upper-Convex Lower-Concave Envelopes}
Let the function $\boldsymbol{g}_k(\mathbf{x},\mathbf{u},\mathbf{w})$ be the $k$-th element of the function $\boldsymbol{g}(\mathbf{x},\mathbf{u},\mathbf{w})$.
\begin{definition}
Upper-convex and lower-concave envelopes, $\boldsymbol{g}^u_k(\mathbf{x},\mathbf{u},\mathbf{w})$ and $\boldsymbol{g}^\ell_k(\mathbf{x},\mathbf{u},\mathbf{w})$, are convex and concave functions of $\mathbf{x}$, $\mathbf{u}$ and $\mathbf{w}$, respectively. The envelopes bound the function, $\boldsymbol{g}_{k}(\mathbf{x},\mathbf{u},\mathbf{w})$, for all $\mathbf{x}\in\mathcal{X}$, $\mathbf{u}\in\mathcal{U}$, and $\mathbf{w}\in\mathcal{W}$,
\begin{equation}
\boldsymbol{g}^\ell_k(\mathbf{x},\mathbf{u},\mathbf{w})\leq \boldsymbol{g}_{k}(\mathbf{x},\mathbf{u},\mathbf{w})\leq \boldsymbol{g}^u_k(\mathbf{x},\mathbf{u},\mathbf{w}),
\end{equation}
where $\mathcal{X}$, $\mathcal{U}$, and $\mathcal{W}$ are domain of state, control, and uncertainty trajectories.
\end{definition}

An example of upper-convex and lower-concave envelopes are shown in \ref{fig:conc_env} with solid red lines. The red region enclosed by the envelopes contains the nonlinearity in dynamics.
Given Assumption \ref{assump:Lipschitz}, Corollary \ref{corollary:envelope} in Appendix \ref{apdx:envelopes} shows that the upper-convex and lower-concave always exists, and we provide examples used in numerical simulation section.

\begin{figure}[!htbp]
	\centering 	\includegraphics[width=2.3in]{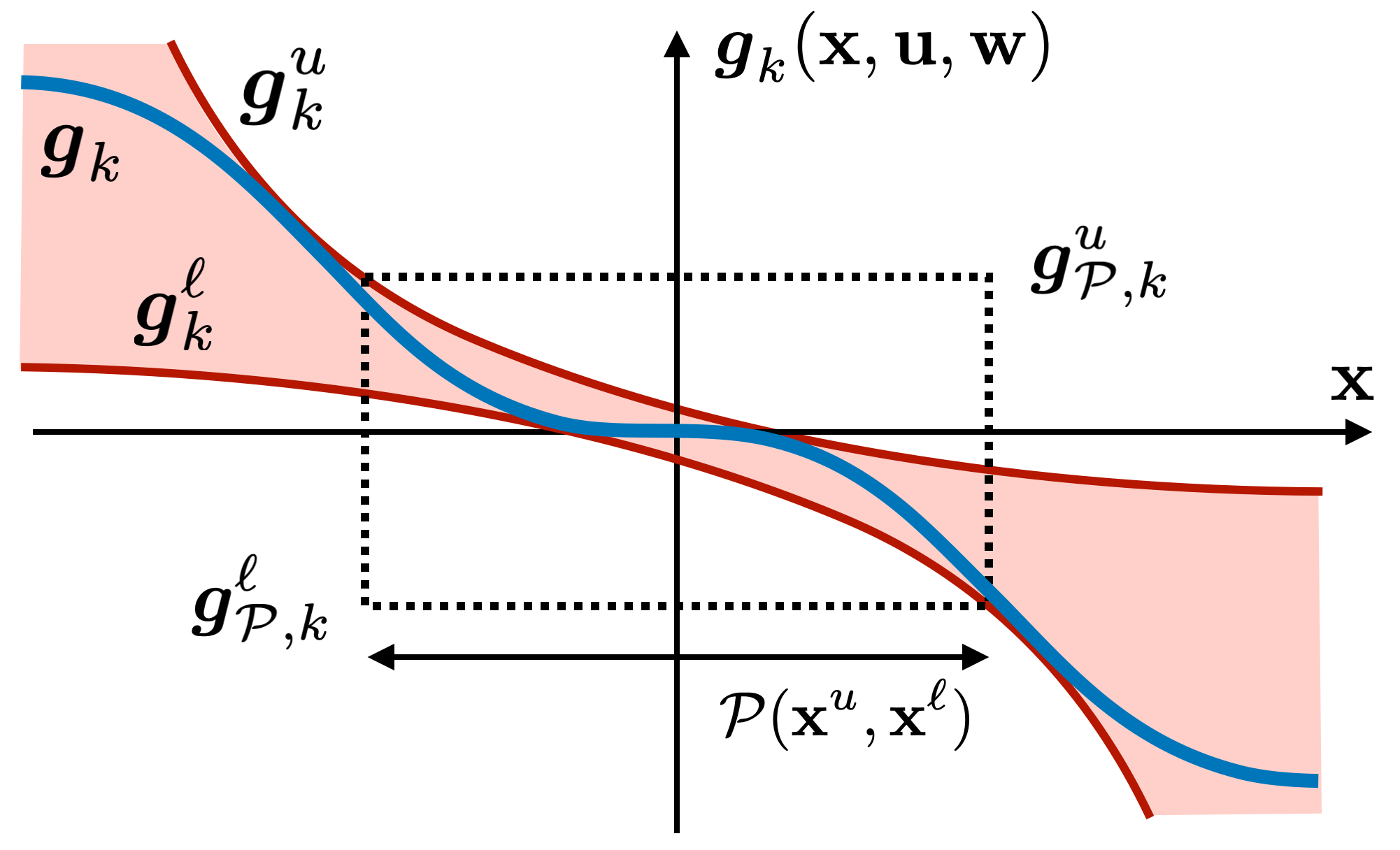}
	\caption{An example of upper-convex and lower-concave envelopes captures the region red that captures the function, $\boldsymbol{g}_k(\mathbf{x},\mathbf{u},\mathbf{w})$.}
	\label{fig:conc_env}
\end{figure}

Using upper-convex and lower concave envelopes, the contribution of nonlinearity within the tube can be bounded as a function of $\mathbf{x}^u$ and $\mathbf{x}^\ell$.
Let us denote the set of vertices of $\mathcal{P}(\mathbf{x}^u,\mathbf{x}^\ell)$ and the outer approximation of $\mathcal{W}(\gamma)$ by
\begin{equation}\begin{aligned}
\partial\mathcal{P}&=\{\mathbf{x}\mid \mathbf{x}_i\in\{\mathbf{x}^u_i,\;\mathbf{x}^u_i\},\, i=1,\ldots,\lvert\mathbf{x}\rvert\}, \\
\partial\mathcal{W}&=\{\mathbf{w}\mid \mathbf{w}_i\in\{\mathbf{w}^\ell_i,\;\mathbf{w}^u_i\},\, i=1,\ldots,\lvert\mathbf{w}\rvert\},
\end{aligned}\label{eqn:partialPW}\end{equation}
where $\mathbf{w}^u_i=\max_{\mathbf{w}\in\mathcal{W}(\gamma)}\mathbf{w}_i$ and $\mathbf{w}^\ell_i=\min_{\mathbf{w}\in\mathcal{W}(\gamma)}\mathbf{w}_i$.

\begin{lemma}
Suppose that for $k=1,\ldots,\lvert\mathbf{x}\rvert$,
\begin{equation}\begin{aligned}
\boldsymbol{g}^u_{\mathcal{P},k}&\geq \boldsymbol{g}^u_k(\mathbf{x},\mathbf{u},\mathbf{w}), &\forall\, \mathbf{x}\in \partial\mathcal{P},\;\forall\, \mathbf{w}\in\partial\mathcal{W},  \\
\boldsymbol{g}^\ell_{\mathcal{P},k}&\leq \boldsymbol{g}^\ell_k(\mathbf{x},\mathbf{u},\mathbf{w}), &\forall\,\mathbf{x}\in \partial\mathcal{P},\;\forall\, \mathbf{w}\in\partial\mathcal{W},
\end{aligned}\label{eqn:g_P}\end{equation}
then for all $\mathbf{x}\in\mathcal{P}(\mathbf{x}^u,\mathbf{x}^\ell)$, $\mathbf{u}\in\mathcal{U}$, and $\mathbf{w}\in\mathcal{W}(\gamma)$,
\begin{equation}
\boldsymbol{g}^\ell_{\mathcal{P},k}\leq \boldsymbol{g}_{k}(\mathbf{x},\mathbf{u},\mathbf{w})\leq \boldsymbol{g}^u_{\mathcal{P},k}.
\end{equation}
\label{lemma:g_P_bound}
\end{lemma}

The proof of Lemma \ref{lemma:g_P_bound} is presented in Appendix \ref{apdx:g_P_bound}, and an illustration is shown in Figure \ref{fig:conc_env}. The variables $\boldsymbol{g}^u_{\mathcal{P},k}$ and $\boldsymbol{g}^\ell_{\mathcal{P},k}$ provide the upper and lower bound on function $\boldsymbol{g}_k$ over the domain, $\mathcal{P}(\mathbf{x}^u, \mathbf{x}^\ell)$. Since the upper envelope is convex, its maximum occurs at one of the vertices of the set $\mathcal{P}(\mathbf{x}^u, \mathbf{x}^\ell)$. Similarly, since the lower envelope is concave, its minimum occurs at one of the vertices of the self-mapping tube.
Notice that the number of inequality constraints in \eqref{eqn:g_P} is $2^{\lvert\mathbf{x}\rvert + \lvert\mathbf{w}\rvert }$ since it lists all possible vertices. However, due to the decoupled nonlinear structure of $\boldsymbol{g}$ between the time steps, the number of vertices that need to be tracked is limited by $N\cdot 2^{n + r}$. Furthermore, Section \ref{sec:scalability} will show that by assuming the system has a sparse nonlinear coupling between its state variables, the number of constraints is bounded by $O(n\cdot N)$.

\subsection{Convex Restriction of Control Actions}
Using envelopes and the fixed-point equation, we present the convex sufficient condition that guarantees that the self-mapping tube in \eqref{eqn:P_tube} is indeed the outer-approximation of possible state trajectories.
Let the constant matrices $K^+,\,K^-\in\mathbb{R}^{\lvert\mathbf{x}\rvert \times \lvert\mathbf{x}\rvert}$ denote $K^+_{ij}=\max\{K_{ij},0\}$ and $K^-_{ij}=\min\{K_{ij},0\}$ for each element of $K$.
The following theorem provides a convex inner-approximation of the control action and an interval outer-approximation of the state trajectory against the uncertainty with a given robustness margin $\gamma$.

\begin{theorem}
Suppose that for a control trajectory $\mathbf{u}$, there exist variables $\mathbf{x}^u,\;\mathbf{x}^\ell,\;\boldsymbol{g}_\mathcal{P}^u,\;\boldsymbol{g}_\mathcal{P}^\ell\in\mathbb{R}^{\lvert\mathbf{x}\rvert}$ that satisfies convex inequality constraints in \eqref{eqn:g_P} and
\begin{equation}\begin{aligned}
&K^+\boldsymbol{g}^u_\mathcal{P}+K^- \boldsymbol{g}^\ell_\mathcal{P}+\xi^u(\gamma)\leq\mathbf{x}^u, \\
&K^+\boldsymbol{g}^\ell_\mathcal{P}+K^- \boldsymbol{g}^u_\mathcal{P}+\xi^\ell(\gamma)\geq\mathbf{x}^\ell,
\end{aligned}\label{eqn:cvxrs_solv}\end{equation}
and for $i=1,\ldots,\lvert\mathbf{x}\rvert$,
\begin{equation}\begin{aligned}
\xi_i^u(\gamma)&=\max_{\mathbf{w}\in\mathcal{W}(\gamma)}R_i\mathbf{w}, \\
\xi_i^\ell(\gamma)&=\min_{\mathbf{w}\in\mathcal{W}(\gamma)}R_i\mathbf{w}.
\end{aligned}\label{eqn:xi}\end{equation}
Then, for every $\mathbf{w}\in\mathcal{W}(\gamma)$, there exists a state trajectory $\mathbf{x}$ such that $\mathbf{x}\in\mathcal{P}(\mathbf{x}^u,\mathbf{x}^\ell)$.
\label{thm:solv}
\end{theorem}

The proof is provided in Appendix \ref{apdx:solv}. The variables $\xi^u(\gamma)$ and $\xi^\ell(\gamma)$ in Equation \eqref{eqn:xi} have analytical solutions for number of types of uncertainty set. The following lemma provides the solutions for ellipsoidal and interval uncertainty sets in \eqref{eqn:uncertainty}.

\begin{lemma}
For an ellipsoidal uncertainty set $\mathcal{W}^Q(\gamma)$ in \eqref{eqn:uncertainty}, $\xi_i^u(\gamma)$ and $\xi_i^\ell(\gamma)$ in \eqref{eqn:xi} are given by
\begin{equation}\begin{aligned}
\xi_i^u(\gamma)&=R_i\mathbf{w}^{(0)}+\gamma\lVert R_i\Sigma^{1/2}\rVert_2, \\
\xi_i^\ell(\gamma)&=R_i\mathbf{w}^{(0)}-\gamma\lVert R_i\Sigma^{1/2}\rVert_2.
\end{aligned}\label{eqn:xi_quad}\end{equation}
for $i=1,\ldots,\lvert\mathbf{x}\rvert$ where $\Sigma\in\mathbb{R}^{\lvert\mathbf{w}\rvert\times \lvert\mathbf{w}\rvert}$ is given by
\begin{equation}\begin{aligned}
\Sigma&=\mathbf{blkdiag}(\Sigma_\textrm{init},\Sigma_0,\ldots,\Sigma_{N-1}).
\end{aligned}\end{equation}

For an interval uncertainty set $\mathcal{W}^I(\gamma)$ in \eqref{eqn:uncertainty}, $\xi_i^u(\gamma)$ and $\xi_i^\ell(\gamma)$ in \eqref{eqn:xi} are given by
\begin{equation}\begin{aligned}
\xi_i^u(\gamma)&=R_i\mathbf{w}^{(0)}+\sum_{k=1}^{\lvert\mathbf{w}\rvert}\lvert R_{ik}\rvert\gamma_k, \\
\xi_i^\ell(\gamma)&=R_i\mathbf{w}^{(0)}-\sum_{k=1}^{\lvert\mathbf{w}\rvert}\lvert R_{ik}\rvert\gamma_k.
\end{aligned}\label{eqn:xi_interval}\end{equation}
\label{lemma:xi}\end{lemma}

The proof of Lemma \ref{lemma:xi} is presented in Appendix \ref{apdx:max_uncertainty}. Next, we add safety constraints to the MPC problem by ensuring the self-mapping tube $\mathcal{P}(\mathbf{x}^u,\,\mathbf{x}^\ell)$ avoids all obstacles.

\subsection{Convex Restriction of Safety Constraints}
In this section, we propose a procedure for deriving the convex restriction of the safety constraints.
The objective is to find a convex subset of $\mathcal{X}_t$ around $x^{(0)}_t$ for $t=1,\ldots,N$. The restricted convex set will be used to certify that the trajectories lie inside the safety constraints.
The procedure relies on the projection of nominal state to obstacles at each time step,
\begin{equation}
P_{\mathcal{B}_{t,i}}[x_t^{(0)}]=\argmin_{x\in\mathcal{B}_{t,i}} \lVert x-x_t^{(0)}\rVert_2^2,
\label{eqn:c_safety}\end{equation}
where $\mathcal{B}_{t,i},\ i=1,\ldots,s$ are obstacles introduced in Section \ref{sec:uncertainty}.
The following lemma provides the convex restriction of the safety constraints using the projections.

\begin{lemma}
The state at time step $t$ satisfies the safety constraints, $x_t\in\mathcal{X}_t$, if $x_t$ satisfies
\begin{equation}
L_tx_t+d_t<0,
\label{eqn:Ldt}
\end{equation}
where the constants $L_t\in\mathbb{R}^{s\times n}$ and $d_t\in\mathbb{R}^s$ are
\begin{displaymath}\begin{aligned}
L_t=\begin{bmatrix} (P_{\mathcal{B}_{t,1}}[x_t^{(0)}]-x_t^{(0)})^T \\ \vdots \\ (P_{\mathcal{B}_{t,s}}[x_t^{(0)}]-x_t^{(0)})^T \end{bmatrix}, \ d_t=-L_t\begin{bmatrix} P_{\mathcal{B}_{t,1}}[x_t^{(0)}] \\ \vdots \\ P_{\mathcal{B}_{t,s}}[x_t^{(0)}] \end{bmatrix}.
\end{aligned}\end{displaymath}
\label{lemma:cvxrs_safety}
\end{lemma}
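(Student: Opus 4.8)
The plan is to read \eqref{eqn:Ldt} as a stack of separating-hyperplane constraints, one per obstacle, and to prove safety obstacle-by-obstacle by contradiction using the projection optimality condition \eqref{eqn:c_safety_optimality}. The first move is to simplify the left-hand side of \eqref{eqn:Ldt} row by row. Because $C_t$ has full column rank (rank $n$) by Condition~1, its Moore--Penrose pseudo-inverse satisfies $C_t^\dagger C_t = I_n$. Substituting the definitions of $L_t$ and $d_t$ and using this identity, the $i$-th component of $L_tC_tx_t + d_t$ collapses to
\begin{displaymath}
(b_{t,(i)}-x_t^{(0)})^T C_t^\dagger C_t x_t + (x_t^{(0)}-b_{t,(i)})^Tb_{t,(i)} = (b_{t,(i)}-x_t^{(0)})^T(x_t-b_{t,(i)}),
\end{displaymath}
so the hypothesis \eqref{eqn:Ldt} is precisely $(b_{t,(i)}-x_t^{(0)})^T(x_t-b_{t,(i)}) < 0$ for every $i=1,\ldots,s$.

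Next I would fix an index $i$ and argue by contradiction. Suppose $x_t\in\mathcal{B}_{t,(i)}$. Since $b_{t,(i)}$ is the Euclidean projection of $x_t^{(0)}$ onto the convex set $\mathcal{B}_{t,(i)}$ from \eqref{eqn:c_safety}, the optimality condition \eqref{eqn:c_safety_optimality} holds for all feasible $\tilde{x}\in\mathcal{B}_{t,(i)}$; taking $\tilde{x}=x_t$ gives $(b_{t,(i)}-x_t^{(0)})^T(x_t-b_{t,(i)})\geq 0$. This directly contradicts the strict inequality derived above, so $x_t\notin\mathcal{B}_{t,(i)}$. As $i$ was arbitrary, $x_t\notin\mathcal{B}_{t,(i)}$ for all $i$, hence $x_t\in\bigcap_{i=1}^s\mathcal{B}_{t,(i)}^C=\mathcal{X}_t$, which is exactly the claim.

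The only point I would verify carefully, rather than a genuine obstacle, is the pseudo-inverse identity $C_t^\dagger C_t = I_n$: it is what licenses the row-wise reduction in the first step, and it hinges entirely on the full-column-rank hypothesis of Condition~1. I would also record the geometric content, since it explains why \eqref{eqn:Ldt} is the right object: each row of the constraint forces $x_t$ to lie strictly on the $x_t^{(0)}$-side of the supporting hyperplane of $\mathcal{B}_{t,(i)}$ passing through the projection point $b_{t,(i)}$ with outward normal $b_{t,(i)}-x_t^{(0)}$. Intersecting these $s$ half-spaces yields a polyhedral (hence convex) inner approximation of the nonconvex safe set $\mathcal{X}_t$, so the lemma indeed delivers a convex \emph{restriction} of the safety constraints as intended.
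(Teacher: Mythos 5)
Your proof is correct and follows essentially the same route as the paper's: reduce each row of \eqref{eqn:Ldt} to the strict inequality $(b_{t,(i)}-x_t^{(0)})^T(x_t-b_{t,(i)})<0$ and conclude $x_t\notin\mathcal{B}_{t,(i)}$ from the projection optimality condition \eqref{eqn:c_safety_optimality}. Your version is in fact slightly more careful than the paper's, since you make explicit the identity $C_t^\dagger C_t=I_n$ (justified by the rank condition on $C_t$) and phrase the final step as a clean contradiction, both of which the paper leaves implicit.
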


A formal proof is presented in Appendix \ref{apdx:lemma_safety}, and Figure \ref{fig:cvxrs_safety} illustrates the underlying idea.
Since the obstacles $\mathcal{B}_{t,i}$ are assumed to be convex, the supporting hyperplane at the projection provides a half-space that obstacle cannot exist. By intersecting these half-spaces, we can derive a convex restriction of safety constraints at each state at time $t$. Next, we provide a sufficient condition for the robust feasible control action by ensuring that the self-mapping tube $\mathcal{P}(\mathbf{x}^u,\mathbf{x}^\ell)$ lies inside the convex restriction of safety constraints.

\begin{figure}[!t]
	\centering 	\includegraphics[width=3.2in]{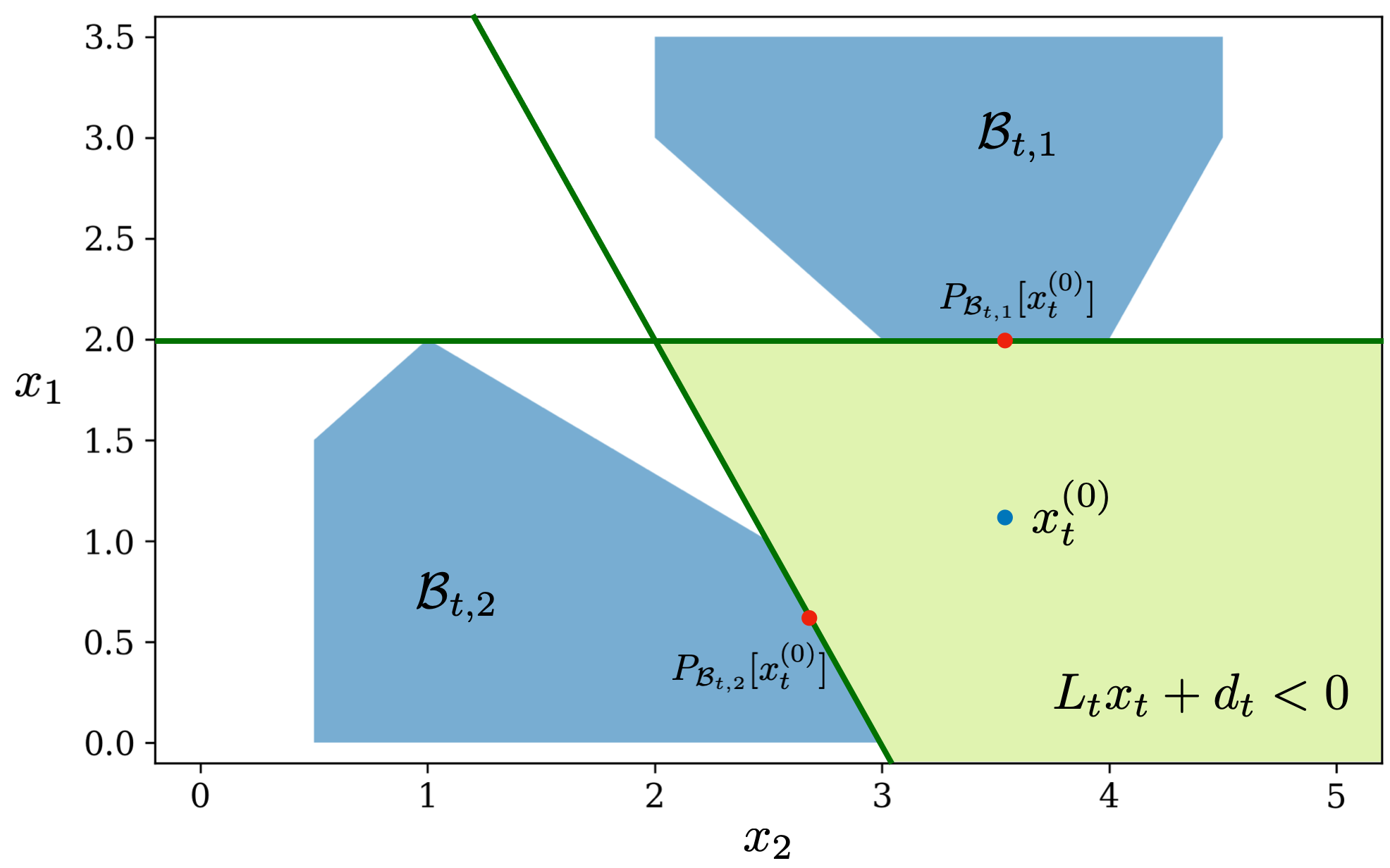}
	\caption{Illustration of convex restriction of safety constraints. The blue dot shows the current nominal state $x_t^{(0)}$, and red dots show its projection to the obstacles. The supporting hyperplanes at the projected point provide half-spaces that are guaranteed to avoid the obstacles. By finding the intersection of half-spaces, convex restriction of safety constraints are shown in the green region.}
	\label{fig:cvxrs_safety}
\end{figure}

\begin{theorem}
The control trajectory $\mathbf{u}$ is a robust feasible control action according to Definition \ref{def:robust} if there exist variables $\mathbf{x}^u,\;\mathbf{x}^\ell,\;\boldsymbol{g}_\mathcal{P}^u,\;\boldsymbol{g}_\mathcal{P}^\ell\in\mathbb{R}^{\lvert\mathbf{x}\rvert}$ that satisfies convex inequality constraints in \eqref{eqn:g_P}, \eqref{eqn:cvxrs_solv}, and
\begin{equation}
L_t^+x_t^u+L_t^-x_t^\ell+d_t<0,\quad \forall t=1,\ldots,N.
\label{eqn:cvx_restr}\end{equation}
\end{theorem}
\begin{proof}
Conditions \eqref{eqn:g_P} and \eqref{eqn:cvxrs_solv} ensure that there exists a state trajectory $\mathbf{x}\in\mathcal{P}(\tilde{\mathbf{z}})$ from Theorem \ref{thm:solv}.
Inequality condition in \eqref{eqn:cvx_restr} ensures that
\begin{displaymath}\begin{aligned}
   L_tx_t+d_t\leq L_t^+x_t^u+L_t^-x_t^\ell+d_t<0,
\end{aligned}\end{displaymath}
 for $t=1,\ldots,N$. From Lemma \ref{lemma:cvxrs_safety}, for all $\mathbf{x}\in\mathcal{P}(\mathbf{x}^\ell, \mathbf{x}^u)$, $x_t\in\mathcal{X}_t$, and thus there exists state trajectories satisfying the safety constraints for all $\mathbf{w}\in\mathcal{W}$. 
\end{proof}


\section{Sequential Convex Restriction}
In this section, we will describe the details of the sequential convex restriction algorithm. The algorithm solves the original constrained robust MPC problem in~\eqref{eqn:CRMPC} via a sequence of deterministic convex optimization problems, which have a well-established theory and tractable algorithms.

\subsection{Objective Function}
The state trajectory is determined by the realizations of uncertain variables and forms a set contained in $\mathcal{P}(\tilde{\mathbf{z}})$. 
\begin{lemma}
If there exists $c^u\in\mathbb{R}$, $c_{\mathcal{P},t}\in\mathbb{R}$ for $t=1,\ldots,N$ such that
\begin{equation}\begin{aligned}
c^u&\geq\sum_{t=0}^{N-1}\left[c_{\mathcal{P},t}+c_{u,t}(u_t)\right]+c_{\mathcal{P},N}, \\
c_{\mathcal{P},t}&\geq c_{x,t}(x_t^u),\hskip1em c_{\mathcal{P},t}\geq c_{x,t}(x_t^\ell),
\end{aligned}\label{eqn:cost_cvx}\end{equation}
for $t=1,\ldots,N,$ then $c^u\geq c(\mathbf{x},\mathbf{u})$ for all $\mathbf{x}\in\mathcal{P}(\mathbf{x}^u,\mathbf{x}^\ell)$.
\label{lemma:cost_bound}
\end{lemma}

The proof of Lemma \ref{lemma:cost_bound} is presented in Appendix \ref{apdx:cost_bound}.
Given the convex restriction of robust feasible control action and the over-estimator of the objective function, we can solve the constrained robust MPC problem in~\eqref{eqn:CRMPC} by replacing the nonconvex constraints with convex restriction,
\begin{equation}
	\begin{aligned}
		\underset{\mathbf{u},\mathbf{x}^u,\mathbf{x}^\ell,\boldsymbol{g}_\mathcal{P}^u,\boldsymbol{g}_\mathcal{P}^\ell}{\text{minimize}} \hskip 1em & \sum_{t=0}^{N-1}\left[c_{\mathcal{P},t}+c_{u,t}(u_t)\right]+c_{\mathcal{P},N} \\
		\text{subject to} \hskip 1em & \text{constraints \eqref{eqn:g_P}, \eqref{eqn:cvxrs_solv}, \eqref{eqn:cvx_restr}, and \eqref{eqn:cost_cvx}}.
	\end{aligned}
	\label{eqn:CVXRS}
\end{equation}

The optimal solution of \eqref{eqn:CVXRS} is always robust feasible, and the solution for $\mathbf{u}$ can be directly used as the control action. 
While the constraints in \eqref{eqn:g_P}, \eqref{eqn:cvxrs_solv}, \eqref{eqn:cvx_restr}, and \eqref{eqn:cost_cvx} ensures that the solution will be robust feasible, it is only a sufficient condition for the robust MPC problem. Therefore, the solution for \eqref{eqn:CVXRS} could be a suboptimal solution if the convex restriction does not span the entire non-convex constraint. In order to achieve lower control cost, sequential convex restriction iteratively solve the problem in \eqref{eqn:CVXRS} while updating the nominal point for constructing convex restriction.

\subsection{Sequential Convex Restriction}
Sequential Convex Restriction (SCR) iteratively solves the optimization problem in \eqref{eqn:CVXRS} to solve the original problem in \eqref{eqn:CRMPC}. The convex restriction is always constructed around the nominal point since coefficients $K$ and $R$ in \eqref{eqn:K} depends on the nominal point. By updating the point of evaluation of the Jacobian, the convex restriction gets updated accordingly.
The algorithm iterates between (a) solving the optimization problem with a convex restriction condition constructed around the nominal trajectory, and (b) setting the solution of the optimization problem as the new nominal trajectory.
The convergence properties were studied in \cite{Lee2019}, and power systems applications were considered in \cite{Lee2019b}.
The algorithm for Model Predictive Control application is described in Algorithm \ref{alg:SCRS} with the termination threshold $\varepsilon>0$.

\begin{algorithm}[!htbp]
\begin{algorithmic}
\STATE \textit{Initialization}: $u^{(0)}=u_0$, $x^{(0)}=x_0$
\WHILE{$\lVert\mathbf{u}^{(k)}-\mathbf{u}^{(k-1)}\rVert>\varepsilon$}
\FOR{$t=1:N$}
\FOR{$i=1:s$}
\STATE $P_{\mathcal{B}_{t,i}}[x_t^{(k)}]=\argmin_{x_t\in\mathcal{B}_{t,i}} \lVert x_t-x_t^{(k)}\rVert_2^2$
\ENDFOR
\STATE Compute $L_t$ and $d_t$ in \eqref{eqn:Ldt}
\ENDFOR
\STATE $\mathbf{u}^{(k+1)}=\argmin_{\mathbf{u}} c^u$ subject to constraints in \eqref{eqn:CVXRS}
\FOR{$t=0:N-1$}
\STATE Solve $x^{(k+1)}_t$ using equations in \eqref{eqn:dyn_explicit} or \eqref{eqn:dyn_implicit}
\ENDFOR
\ENDWHILE
\end{algorithmic}
\caption{Sequential Convex Restriction (SCR) for Constrained Robust MPC}
\label{alg:SCRS}
\end{algorithm}

The sequential convex restriction is a local search method, and its convergence depends on the initializations of the variables. The initialization of the algorithm is denoted by $\mathbf{x}^{(0)}$ and can be obtained by any control strategy.
The main part of the algorithm consists of three main steps. The first is finding the convex restriction of the safety constraint, according to Lemma \ref{lemma:cvxrs_safety}.
The second step is solving the convex optimization problem with convex restriction. As was shown in Remark \ref{remark:scalability}, the optimization solves a convex quadratic problem that scales with $n\cdot N$ and $s\cdot N$ for sparse nonlinear systems.
The third step is retrieving the state trajectory using the new control solution by simply simulating the dynamics.
By leveraging warm-start in convex optimization problems, the sequence of problems can be solved efficiently. 
A related algorithm for the non-robust formulation is Sequential Quadratic Programming \cite{nocedal2006numerical}, which has a wide range of applications in MPC \cite{Hovgaard2013,Hovgaard,Alrifaee2017}. The key advantage of Sequential Convex Restriction is that it enables consideration of robust feasibility against the uncertainty.

\subsection{Robustness of the Planned Trajectory}\label{sec:robustness_of_trj}
Given the nominal control trajectory $\mathbf{u}^{(0)}$, the robustness margin of the control action can be defined as the maximum margin of uncertainty that the system can tolerate. This problem can be solved by a convex optimization problem using convex restriction,
\begin{equation}\begin{aligned}
    \underset{\mathbf{u},\mathbf{x}^u,\mathbf{x}^\ell,\boldsymbol{g}_\mathcal{P}^u,\boldsymbol{g}_\mathcal{P}^\ell,\gamma}{\text{maximize}} \hskip 1em & \gamma \\
    \text{subject to} \hskip 1em & \text{\eqref{eqn:g_P}, \eqref{eqn:cvxrs_solv}, \eqref{eqn:cvx_restr}, and } \mathbf{u}=\mathbf{u}^{(0)}
\end{aligned}\label{eqn:robOPT_gamma}\end{equation}

We note that the convex restriction is only a sufficient condition, and the solution of the problem in \eqref{eqn:robOPT_gamma} is the lower bound of the true robustness margin.

\subsection{Scalability} \label{sec:scalability}
Let us define $\mathcal{I}_k\subseteq\{1,\ldots,n\}$ and $\mathcal{J}_k\subseteq\{1,\ldots,r\}$ to be the subsets of indices of $x$ and $w$ such that the function $g_k$ is dependent on. That is, the subsets, $\{x_i\}_{i\in\mathcal{I}_k}$ and $\{w_i\}_{i\in\mathcal{J}_k}$, are sufficient to evaluate the function $g_k(x,u,w)$:
\begin{displaymath}
g_k(x,u,w)=g_k(\{x_i\}_{i\in\mathcal{I}_k},u,\{w_i\}_{i\in\mathcal{J}_k}).
\end{displaymath}

Note that the function $g_k(x,u,w)$ is a nonlinear residual term, which is a remainder after subtracting the linearized dynamics. Thus, the sets, $\mathcal{I}_k$ and $\mathcal{J}_k$, contains the indicies of variables that have a nonlinear coupling with $x_k$. The cardinality of $\mathcal{I}$ is denoted by $\lvert\mathcal{I}_k\rvert\in\mathbb{N}$, and the worst-case cardianlity is denoted by $\lvert\mathcal{I}\rvert=\max_{k\in\{1,\ldots,n\}}\lvert\mathcal{I}_k\rvert$.
Consider the following example for an illustration.

\begin{example} Suppose that the system dynamics in \eqref{eqn:dyn} is provided by $f(x,u,w)
=[x_3\cos{x_4}+w_1,\; x_3\sin{x_4}+w_2,\; u_1,\; x_3\tan{u_2}]^T$. The residual function $g(x,u,w)$ according to \eqref{eqn:g_residual} is
\begin{equation}\begin{aligned}
g(x,u,w)=& \\
&\hskip-3em h\cdot\begin{bmatrix} x_3\cos{x_4}-x_3\cos{x_4^{(0)}}+x_3^{(0)}\sin{x_4^{(0)}}x_4 \\ 
x_3\sin{x_4}-x_3\sin{x_4^{(0)}}-x_3^{(0)}\cos{x_4^{(0)}}x_4 \\ 
u_1 \\ 
x_3\tan{u_2}-x_3\tan{u^{(0)}_2} \end{bmatrix}.
\end{aligned}\end{equation}
Since $g_1$ is a function of $x_3$ and $x_4$, $\mathcal{I}_1=\{3,4\}$. Similarly, $\mathcal{I}_2=\{3,4\}$, $\mathcal{I}_3=\{\emptyset\}$, and $\mathcal{I}_4=\{3\}$. Moreover, $\mathcal{J}_k=\{\emptyset\}$ for $k=1,\ldots,4$. The worst-case cardianlities are given by $\lvert\mathcal{I}\rvert=2$ and $\lvert\mathcal{J}\rvert=0$.
\label{example:navigation}
\end{example}

The scalability of the convex restriction depends on the cardinality of the nonlinear coupling between variables. Since the function $g$ is a nonlinear residual, the linear coupling between variables does not affect the cardinality. The following remark shows the relationship between the cardinality and the number of constraints in convex restriction.

\begin{remark}
The number of constraints in \eqref{eqn:cvx_restr} is bounded by $O(n\cdot N \cdot 2^{\lvert\mathcal{I}\rvert+\lvert\mathcal{J}\rvert})$.
\label{remark:scalability}\end{remark}

A system with a sparse nonlinear structure has a representation such that $\lvert\mathcal{I}\rvert <<n$ and $\lvert\mathcal{J}\rvert <<n$.
Given that the system has a sparse nonlinear coupling, the number of constraints grows linearly with respect to the size of the MPC problem $n\cdot N$.


\begin{remark}
If the system dynamics in \eqref{eqn:dyn} is given by $f(x,u,w)=Ax+Bu+Cw$ where $A$, $B$, and $C$ are constant matrices. Then the nonlinear residual $g(x,u,w)=Bu$ is only a function of $u$.
\end{remark}

\section{Numerical Results}
This section presents a numerical example that is illustrated on a ground vehicle navigation model. This example contains nonconvex safety constraints, which are obstacles in the context of navigation problems. The numerical experiments were done on 3.3 GHz Intel Core i7 with 16 GB Memory, and the convex optimization problems were implemented with Python with CVXPY with ECOS as the solver \cite{jump}. The MOSEK solver was used to solve the convex Quadratically Constrained Quadratic Programming problems generated by convex restriction. 

\subsection{Ground Vehicle Navigation}
\subsubsection{Ground Vehicle Model}
The dynamics of the ground vehicle is given by
\begin{equation}
\frac{d}{dt}\begin{bmatrix} x_1 \\ x_2 \\ v \\ \theta \end{bmatrix}
=\begin{bmatrix} v\cos{\theta}+w_1 \\ v\sin{\theta}+w_2 \\ u_1+w_3 \\ v\tan{u_2}+w_4 \end{bmatrix}
\end{equation}
where $(x_1,x_2)\in\mathbb{R}^2$ and $\theta$ are the vehicle's position and direction. The variable $v$ is the speed, and $u_1$ and $u_2$ are acceleration and steering velocity. Euler's forward method was used for time discretization with the step size $h=0.05$. 
The degree of sparsity in this system is $\lvert \mathcal{I}\rvert$=2 since the terms $v\cos{\theta}$ and $v\sin{\theta}$ involve $v$ and $\theta$ as the dependent variables.
The safety constraints considered two time-invariant obstacles, which are expressed by a polytope of the form $\mathcal{B}_{(i)}$, $i=1,2$. These obstacles are shown in blue in Figure \ref{fig:gv_trj_bound}.
The control actions were subject to the limits, $u_{t,1}\in[-1,\,1]$ and $u_{t,2}\in[-0.785,\,0.785]$, and the vehicle speed is limited by $x_{t,3}\in[-5.55,\,15.28]$.

The uncertainty in initial condition is set to $\mathcal{W}_\textrm{init}(\gamma_\textrm{init})=\{(x_1,x_2,v,\theta)\mid x_1^2+x_2^2\leq\gamma_{\textrm{init},x}^2,\; \lvert v\rvert\leq \gamma_{\textrm{init},v},\; \lvert \theta\rvert\leq \gamma_{\textrm{init},\theta}\}$ 
where $\gamma_{\textrm{init},x}=0.05$, $\gamma_{\textrm{init},v}=0.05$, and $\gamma_{\textrm{init},\theta}=0.005$.
The uncertainty set in dynamics is set to $\mathcal{W}_t(\gamma_\textrm{dyn})=\{w\mid w_1^2+w_2^2\leq\gamma_{\textrm{dyn},x}^2,\; \lvert w_3\rvert\leq \gamma_{\textrm{dyn},v},\; \lvert w_4\rvert\leq \gamma_{\textrm{dyn},\theta}\}$ where $\gamma_{\textrm{dyn},x}=0.05$, $\gamma_{\textrm{dyn},v}=0.005$, and $\gamma_{\textrm{dyn},\theta}=0.005$.
The cost function for the robust MPC problem was set to $c_{u,t}(u_t)=0.01 u_{t,1}^2 + 0.001 u_{t,2}^2$ and $c_{x,t}(x_t)=(x-x_\textrm{target})^TQ(x-x_\textrm{target})$ where $x_\textrm{target}=(3,5,0,0)$ and $Q=\textrm{diag}([1,1,0.5,0.0001])$.

\subsubsection{Constrained Robust Model Predictive Control}
The constrained robust MPC was solved using sequential convex restriction.
The subproblems were solved with SCR described in Algorithm \ref{alg:SCRS}.

\begin{figure}[!htbp]
	\centering
	\includegraphics[width=3.0in]{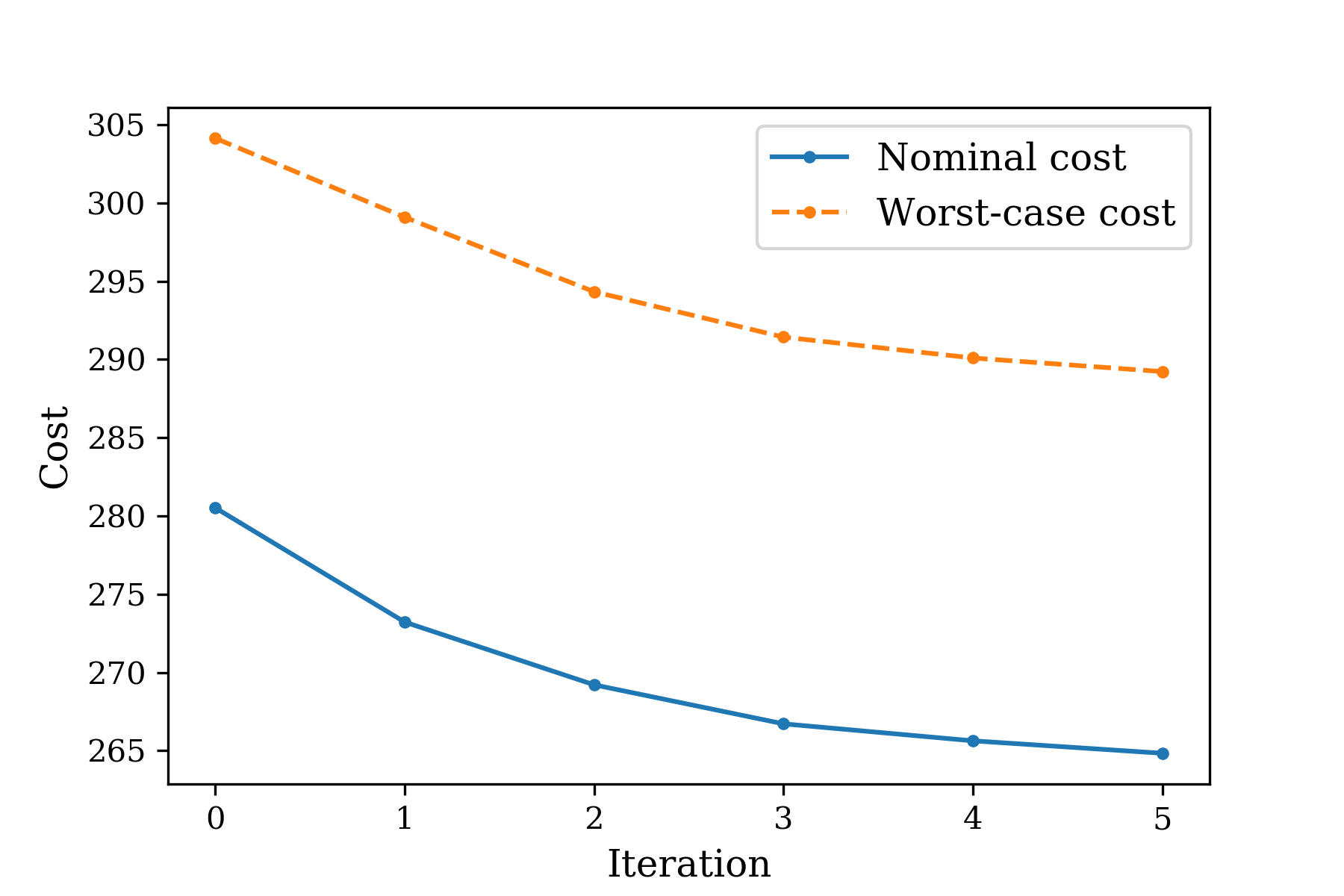}
	\caption{
	The convergence of sequential convex restriction is shown. The blue plot shows the nominal cost as a function of number of iterations, and the orange plot shows the bound on worst-case cost obtained by solving the optimization problem in \eqref{eqn:CVXRS}.}
	\label{fig:convergence}
\end{figure}

Figure \ref{fig:convergence} shows the convergence plot for both nominal and worst-case costs. The average solver time per iteration was 0.733 seconds, and it took five iterations to converge. The figure provides some insight into the conservatism of our approach. The worst-case cost provides the upper bound, and the nominal cost provides the lower bound on the control cost for all realizations of uncertainty set. This corresponds to an uncertainty of 8.44 \% with respect to the worst-case cost.

The following experiments show the result where the initial trajectory was provided by a path-following feedback control. The trajectory was optimized again using SCR with $N=50$.

\begin{figure}[!htbp]
	\centering
	\includegraphics[width=3.3in]{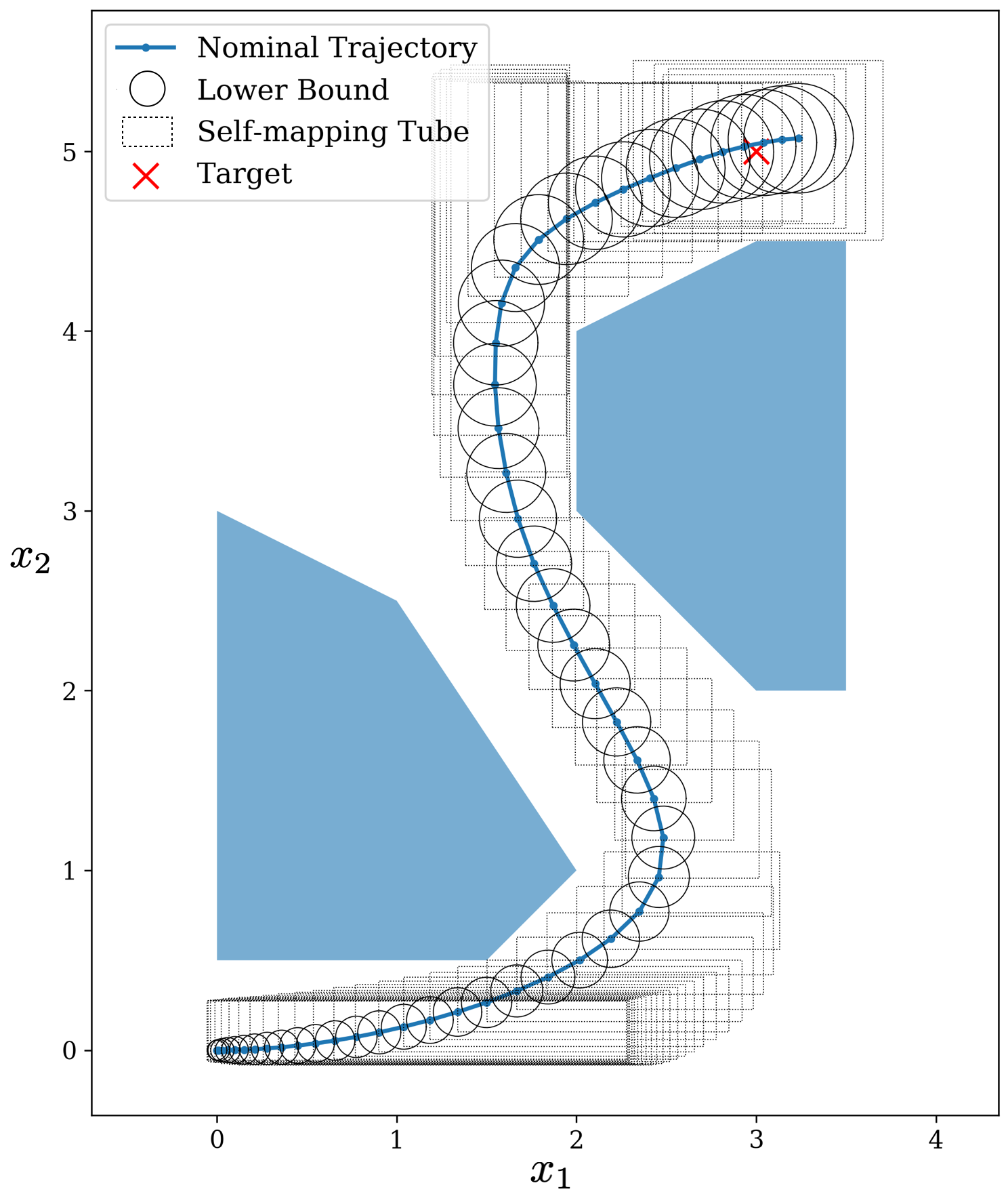}
	\caption{
	The nominal state trajectory obtained by the sequential convex restriction is shown in blue line. The obstacles are shown in two blue regions, and the uncertainty set is shown with black circles. The grey rectangular boxes shows the solution for the self-mapping tube $\mathcal{P}(\tilde{\mathbf{z}})$, which provides the outer approximation of the possible state trajectories.}
	\label{fig:gv_trj_bound}
\end{figure}

Figure \ref{fig:gv_trj_bound} shows the nominal state trajectories of the solution to the algorithm as well as the set of possible state trajectories under uncertainty and the self-mapping tube. The set of possible state trajectories under uncertainty grows with time due to the propagation of uncertain variables in dynamics. The self-mapping tube is guaranteed to contain the possible state trajectory and satisfies the safety constraints. The circles represent the inner approximation of possible state realization. Any point inside the circle can be realized by some uncertainty trajectory within the specified uncertainty set.

\begin{figure}[!htbp]
    \centering
    \includegraphics[width=3.4in]{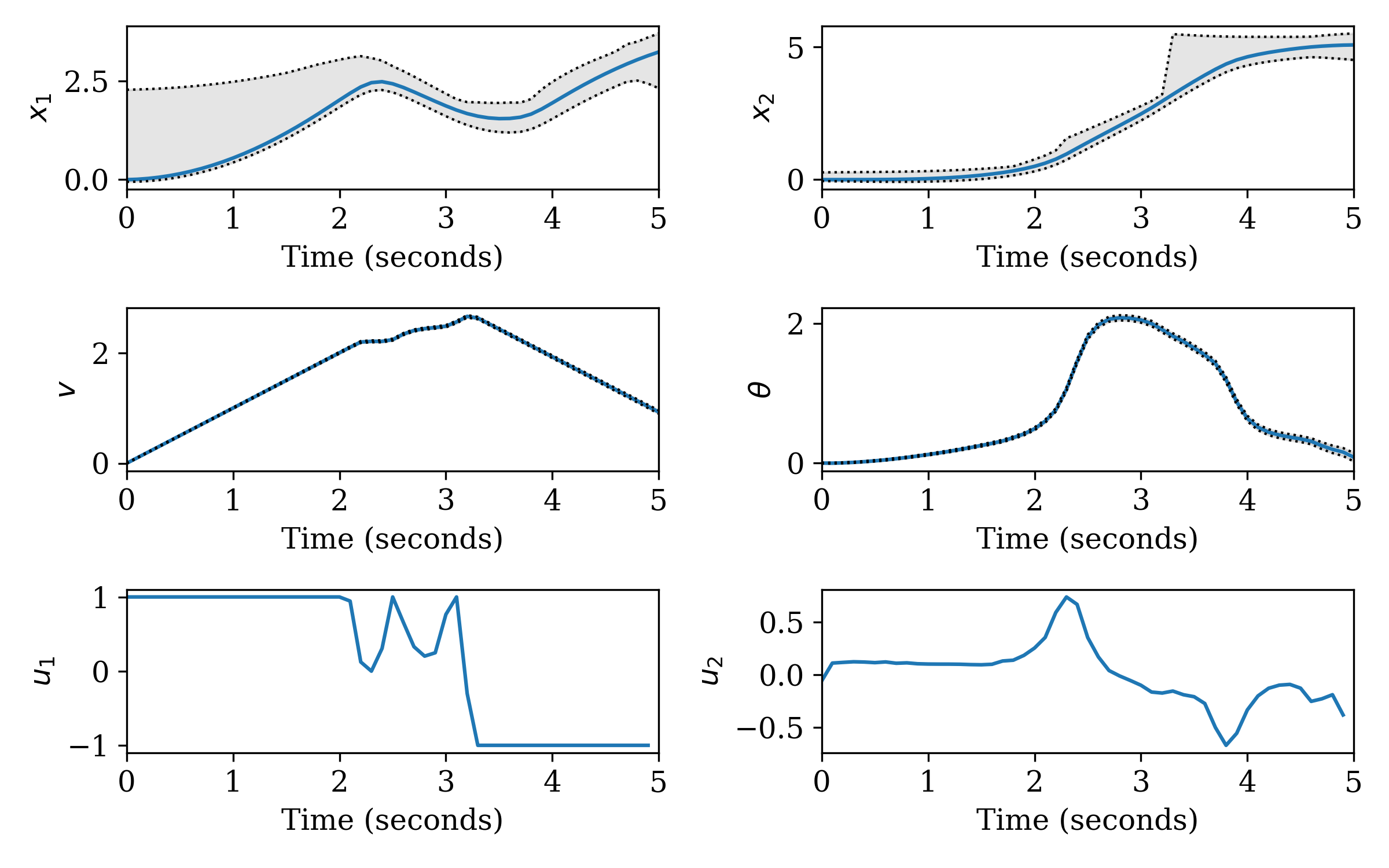}
    \caption{The state trajectories for the obtained control action is shown with the self-mapping tube obtained by the convex restriction. The control action obtained by the convex restriction is shown. The limits of the control inputs are $u_1\in[-100,20]$ and $u_2\in[-1.5,1.5]$.}
    \label{fig:state_trajectory}
\end{figure}

Figure \ref{fig:state_trajectory} shows the state and control trajectories of the solution from sequential convex restriction. The position of the vehicle safely arrives at the target point. The trajectories of the vehicle's velocity and angle are tight to the self-mapping tube since the associated dynamics are linear and are not affected by the uncertain variables.

One unconventional feature in convex restriction that distinguishes itself from conventional approaches is that it does not rely on propagating the uncertainty set in the time domain. 
The outer approximation in Figure \ref{fig:gv_trj_bound} is verified via the fixed-point theorem, and the tube does not become overly conservative over time. 
The outer approximation's tightness is enforced only on the bottom and left faces of the rectangles where the worst-case cost occurs since it is the furthest point away from the target point.
This feature allows us to obtain the tube that satisfies the safety constraint while not overly approximating the worst-case realization of the state trajectories.



\section{Conclusion}
This paper considers constrained robust model predictive control, which requires the satisfaction of safety constraints under all realizations of uncertainty. The MPC formulation's main advantage is that it gives a unified treatment to the problem involving nonlinear dynamics, nonconvex safety constraints, and provable robustness against uncertainty. The numerical simulation showed that the proposed condition is not overly conservative and can be solved efficiently by convex quadratic programming. Furthermore, we considered an implicit time-discretization scheme, which improves the step size and numerical accuracy.

Future work involves generalizing the MPC problem with the Runge-Kutta method for time-discretization. Similar to numerical simulations, an appropriate time-discretization scheme for the MPC problem will require further investigation to improve computational speed and numerical accuracy.
Another line of future work includes extending the analysis to consider secondary feedback control to maximize the system's robustness. The feedback control can establish connections to stability analysis and reduce the variance of the state trajectory.
Combined solutions from stability and convex restriction can further enhance both the optimality and the robustness.

\appendix

\subsection{Jacobian for System of Dynamical Equations}
\label{appdx:Jacobian}
In this section, we derive the Jacobian of the system of equation and its inverse in Equation \eqref{eqn:fxdpt_form}.
\subsubsection{Jacobian for Explicit Time-discretization}
The Jacobian for the system of equations, $F_\textrm{explicit}(\mathbf{x},\mathbf{u},\mathbf{w})$ in \eqref{eqn:F_dyn_explicit}, evaluated at the nominal trajectory is given by
\begin{equation}
\frac{\partial F_\textrm{explicit}}{\partial \mathbf{x}}\biggm|_{(0)}=
\begin{bmatrix} -I & \mathbf{0} & \hdots & \mathbf{0} \\
A_1^{(0)} & -I & \mathbf{0} & \vdots \\
\mathbf{0} & \ddots & \ddots & \mathbf{0} \\
\mathbf{0} & \mathbf{0} & A_{N}^{(0)} & -I \\
\end{bmatrix},
\label{eqn:Jac_F_explicit}
\end{equation}
where $A_t^{(0)}=I+h\cdot\frac{\partial f}{\partial x}\big|_{(x^{(0)}_{t-1},u^{(0)}_t,w^{(0)}_t)}$.

\begin{lemma}
The inverse of the Jacobian in \eqref{eqn:Jac_F_explicit} always exists and has the following closed-form representation,
\begin{equation}
\frac{\partial F_\textrm{explicit}}{\partial \mathbf{x}}\biggm|_{(0)}^{-1}=-\begin{bmatrix} I & \mathbf{0} & \hdots & \mathbf{0} \\
A_{(1,0)}^{(0)}& I & \mathbf{0} & \vdots \\
\vdots & \ddots & \ddots & \mathbf{0} \\
A_{(N,0)}^{(0)} & \hdots & A_{(N,N-1)}^{(0)} & I \\
\end{bmatrix},
\label{eqn:Jac_F_inv_explicit}\end{equation}
where $A_{(i,j)}^{(0)}$ represents the linear sensitivity of the state at time step $i$ with respect to the state at time step $j$. The sensitivity can be solved by applying the chain rule,
\begin{equation}
A_{(i,j)}^{(0)}=\prod_{t=j+1}^{i} \left(I+h\cdot\frac{\partial f}{\partial x}\Big|_{\left(x^{(0)}_{t-1},u^{(0)}_t,w^{(0)}_t\right)}\right).
\end{equation}
\label{lemma:Jac_explicit}
\end{lemma}
Lemma \ref{lemma:Jac_explicit} can be checked by showing that multiplication of \eqref{eqn:Jac_F_explicit} and \eqref{eqn:Jac_F_inv_explicit} is an identity matrix.

\subsubsection{Jacobian for Implicit Time-discretization}

The Jacobian for the system of equations, $F_\textrm{implicit}(\mathbf{x},\mathbf{u},\mathbf{w})$ in \eqref{eqn:F_dyn_implicit}, evaluated at the nominal trajectory is given by
\begin{equation}
\frac{\partial F_\textrm{implicit}}{\partial \mathbf{x}}\biggm|_{(0)}=
\begin{bmatrix} -I & \mathbf{0} & \hdots & \mathbf{0} \\
I & -A_1^{(0)} & \mathbf{0} & \vdots \\
\mathbf{0} & \ddots & \ddots & \mathbf{0} \\
\mathbf{0} & \mathbf{0} & I & -A_N^{(0)} \\
\end{bmatrix},
\label{eqn:Jac_F_implicit}
\end{equation}
where $A_t^{(0)}=I-h\cdot\frac{\partial f}{\partial x}\big|_{(x^{(0)}_t,u^{(0)}_t,w^{(0)}_t)}$.

\begin{lemma}
The inverse of the Jacobian in \eqref{eqn:Jac_F_implicit} always exists and has the following closed-form representation,
\begin{equation}
\frac{\partial F_\textrm{implicit}}{\partial \mathbf{x}}\biggm|_{(0)}^{-1}=-\begin{bmatrix} 
I & \mathbf{0} & \hdots & \mathbf{0} \\
A_{(1,1)}^{(0),-1} & A_{(1,1)}^{(0),-1} & \mathbf{0} & \vdots \\
\vdots & \vdots & \ddots  & \mathbf{0} \\
A_{(N,1)}^{(0),-1} & A_{(N,1)}^{(0),-1} & \hdots & A_{(N,N)}^{(0),-1} \\
\end{bmatrix},
\label{eqn:Jac_F_inv_implicit}
\end{equation}
where $A_{(i,j)}^{(0)}$ represents the linear sensitivity of the state at time step $i$ with respect to the state at time step $j$. The sensitivity can be solved by applying the chain rule,
\begin{equation}
A_{(i,j)}^{(0),-1}=\prod_{t=j}^{i} \left(I-h\cdot\frac{\partial f}{\partial x}\Big|_{\left(x^{(0)}_t,u^{(0)}_t,w^{(0)}_t\right)}\right)^{-1}.
\end{equation}
\label{lemma:Jac_implicit}
\end{lemma}
Similar to explicit scheme, Lemma \ref{lemma:Jac_implicit} can be checked by showing that multiplication of \eqref{eqn:Jac_F_implicit} and \eqref{eqn:Jac_F_inv_implicit} is an identity matrix.


\subsection{Residual Feedback Representation}
\label{apdx:residual}

The residual representation of the system involves the nonlinear residual function $g:(\mathbb{R}^n,\mathbb{R}^m)\rightarrow \mathbb{R}^n$,
\begin{equation}\begin{aligned}
\frac{d}{dt}x&=A(t)x+B(t)w+g(x,u,w) \\
g(x,u,w)&=f(x,u,w)-A(t)x-B(t)w
\end{aligned}\label{eqn:residual}\end{equation}
where $A(t)=\frac{\partial f}{\partial x}\big|_{(x^{(0)},u^{(0)},w^{(0)})}$ and $B(t)=\frac{\partial f}{\partial w}\big|_{(x^{(0)},u^{(0)},w^{(0)})}$ are the Jacobians of the system dynamics evaluated at the nominal values. This representation is related to the Lur'e form in control \cite{Slotine1991,Khalil2002}, which uses the sector bound to contain the nonlinearity. Similarly, we use upper-convex and lower-concave envelopes for bounding the nonlinearity. 

The residual functions in \eqref{eqn:g_residual} has the following representation for explicit scheme,
\begin{equation}
\boldsymbol{g}_\textrm{explicit}(\mathbf{x},\mathbf{u},\mathbf{w})=\begin{bmatrix} \mathbf{0}_n \\ h\cdot g(x_0,u_1,w_1) \\ \vdots \\ h\cdot g(x_{N-1},u_N,w_N) \end{bmatrix},
\end{equation}
and for implicit scheme,
\begin{equation}
\boldsymbol{g}_\textrm{implicit}(\mathbf{x},\mathbf{u},\mathbf{w})=\begin{bmatrix} \mathbf{0}_n \\ h\cdot g(x_1,u_1,w_1) \\ \vdots \\ h\cdot g(x_N,u_N,w_N) \end{bmatrix},
\end{equation}
where $g(x,u,w)$ is the nonlinear residual defined in \eqref{eqn:residual} and $\mathbf{0}_n\in\mathbb{R}^n$ is a vector of zeros. 

The worst-case contribution from the residual function over the domain $x\in\mathcal{P}(x^u,x^\ell)$ and $w\in\mathcal{W}(\gamma)$ are
\begin{equation}\begin{aligned}
\boldsymbol{g}^u_{\mathcal{P},k}(\mathbf{u},\mathbf{x}^u, \mathbf{x}^\ell)&=\max_{\mathbf{x}\in\mathcal{P}(\mathbf{x}^\ell, \mathbf{x}^u)}\max_{\mathbf{w}\in\mathcal{W}(\gamma)}\ \boldsymbol{g}^u_k(\mathbf{x},\mathbf{u},\mathbf{w}), \\
\boldsymbol{g}^\ell_{\mathcal{P},k}(\mathbf{u},\mathbf{x}^u, \mathbf{x}^\ell)&=\min_{\mathbf{x}\in\mathcal{P}(\mathbf{x}^\ell, \mathbf{x}^u)}\min_{\mathbf{w}\in\mathcal{W}(\gamma)}\ \boldsymbol{g}^\ell_k(\mathbf{x},\mathbf{u},\mathbf{w}).
\end{aligned}\end{equation}
These bounds on $\boldsymbol{g}^u_k(\mathbf{x},\mathbf{u},\mathbf{w})$ are established by the conditions in Lemma \ref{lemma:g_P_bound}.

\subsection{Proof of Theorem \ref{thm:Brouwer}}
\label{apdx:Brouwer}

We first introduce Brouwer's fixed point theorem.
\begin{theorem}
(Brouwer's fixed point theorem \cite{brouwer1911abbildung}) Let $\mathcal{P}\subseteq\mathbb{R}^n$ be a nonempty, compact, convex set and $T:\mathcal{P}\rightarrow\mathcal{P}$ be a continuous map. Then, there exists some $x\in\mathcal{P}$ such that $T(x)=x$.
\end{theorem}

The following provides the proof of Theorem \ref{thm:Brouwer}, which extends Brouwer's fixed point theorem to include external parameters, which are control and uncertain variables.

\begin{proof}
Given that there exists a continuous map $\mathcal{P}$ that satisfies the self-mapping property, Brouwer's fixed point theorem guarantees the existence of a fixed point to the equation \eqref{eqn:Brouwer} such that $T_{\mathbf{u},\mathbf{w}}[\mathbf{x}]=\mathbf{x}$ and $x\in\mathcal{P}$. From Lemma \ref{lemma:fixed_point}, the fixed point satisfies $F(\mathbf{x},\mathbf{u},\mathbf{w})=0$.
\end{proof}

\subsection{Upper-Convex Lower-Concave Envelopes}\label{apdx:envelopes}
\begin{lemma}
If $f(x)$ is uniformly Lipschitz in $x$ with Lipschitz constant $L$, then 
\begin{equation}\begin{aligned}
    f(x)\leq f(x^{(0)})+\nabla f(x^{(0)})^T(x-x^{(0)})+\frac{L}{2}\lVert x-x^{(0)}\rVert^2_2, \\
    f(x)\geq f(x^{(0)})+\nabla f(x^{(0)})^T(x-x^{(0)})-\frac{L}{2}\lVert x-x^{(0)}\rVert^2_2. \\
\end{aligned}\label{eqn:proof_lemma9}\end{equation}
\label{lemma:envelope_f}\end{lemma}

\begin{proof}
From calculus, $f(x)=f(y)+\int_0^1\nabla f(z_\alpha)^T(x-y)$ where $z_\alpha=y+\alpha(x-y)$. Then,
\begin{displaymath}\begin{aligned}
&\hskip-3em\lvert f(x)-f(y)-\nabla f(y)^T(x-y)\rvert \\
&= \left\lvert\int_0^1(\nabla f(z_\alpha)-\nabla f(y))^T(x-y)d\alpha\right\rvert \\
&\leq \int_0^1\left\lVert\nabla f(z_\alpha)-\nabla f(y)\right\rVert_2\left\rVert x-y\right\rVert_2 d\alpha \\
&\leq L\int_0^1 \alpha\left\rVert x-y\right\rVert_2^2 d\alpha = \frac{L}{2}\left\rVert x-y\right\rVert_2^2.
\end{aligned}\end{displaymath}
By rearranging and letting $y=x^{(0)}$, we obtain \eqref{eqn:proof_lemma9}.
\end{proof}

\begin{corollary}
Given that the system $f(x,u,w)$ is uniformly Lipschitz with Lipschitz constant $L$, the nonlinearity is bounded by the following envelopes:
\begin{equation}\begin{aligned}
\boldsymbol{g}^u_k(\mathbf{x},\mathbf{u},\mathbf{w})=\boldsymbol{g}^{(0)}_k + \frac{L}{2}\left\lVert\begin{bmatrix}\mathbf{x}-\mathbf{x}^{(0)} \\ \mathbf{u}-\mathbf{u}^{(0)} \\ \mathbf{w}-\mathbf{w}^{(0)}\end{bmatrix}\right\rVert^2_2, \\
\boldsymbol{g}^\ell_k(\mathbf{x},\mathbf{u},\mathbf{w})=\boldsymbol{g}^{(0)}_k - \frac{L}{2}\left\lVert\begin{bmatrix}\mathbf{x}-\mathbf{x}^{(0)} \\ \mathbf{u}-\mathbf{u}^{(0)} \\ \mathbf{w}-\mathbf{w}^{(0)}\end{bmatrix}\right\rVert^2_2.
\end{aligned}\label{eqn:g_bound}\end{equation}
where $\boldsymbol{g}^{(0)}_k = \boldsymbol{g}_k(\mathbf{x}^{(0)},\mathbf{u}^{(0)},\mathbf{w}^{(0)})$ is the residual function evaluated at the nominal trajectory.
\label{corollary:envelope}\end{corollary}
\begin{proof}
From the definition of $\boldsymbol{g}_k(\mathbf{x},\mathbf{u},\mathbf{w})$ in \eqref{eqn:g_residual}, $\nabla\boldsymbol{g}_k(\mathbf{x}^{(0)},\mathbf{u}^{(0)},\mathbf{w}^{(0)})=\mathbf{0}$. Given that the system is uniformly Lipschitz continuous from Assumption \ref{assump:Lipschitz}, the Lipschitz constant is the same as the function $F_k(\mathbf{x},\mathbf{u},\mathbf{w})$. Substituting them to Lemma \ref{lemma:envelope_f}, we obtain \eqref{eqn:g_bound}.
\end{proof}

\subsubsection{Envelopes for Bilinear Function}\label{apdx:bilinear}
Bilinear functions can be bounded by the following envelopes with some constants $\rho_1, \, \rho_2>0$ and the nominal point $x^{(0)},\, y^{(0)}$ \cite{Lee2019b}:
\begin{displaymath}
\begin{aligned}
xy&\geq x^{(0)}y^{(0)}+y^{(0)}(x-x^{(0)})+x^{(0)}(y-y^{(0)}) \\
&\hskip5em -\frac{1}{4}\left[\rho_1(x-x^{(0)})-\frac{1}{\rho_1}(y-y^{(0)})\right]^2, \\
xy&\leq x^{(0)}y^{(0)}+y^{(0)}(x-x^{(0)})+x^{(0)}(y-y^{(0)}) \\
&\hskip5em +\frac{1}{4}\left[\rho_2(x-x^{(0)})+\frac{1}{\rho_2}(y-y^{(0)})\right]^2.
\end{aligned}
\end{displaymath}

\subsubsection{Envelopes for Sine}
Sine and cosine functions have Lipschitz constant of 1. From Lemma \ref{lemma:envelope_f}, sine function with $\theta,\theta^{(0)}\in\mathbb{R}$ can be bounded by the following envelopes:
\begin{displaymath}
\begin{aligned}
\sin\theta&\geq\sin{\theta^{(0)}}+\cos{\theta^{(0)}}(\theta-\theta^{(0)})-\frac{1}{2}(\theta-\theta^{(0)})^2, \\
\sin\theta&\leq \sin{\theta^{(0)}}+\cos{\theta^{(0)}}(\theta-\theta^{(0)})+\frac{1}{2}(\theta-\theta^{(0)})^2.
\end{aligned}
\end{displaymath}

\subsubsection{Envelopes for Cosine}
Similarly, cosine function with $\theta,\theta^{(0)}\in\mathbb{R}$ can be bounded by the following envelopes:
\begin{displaymath}
\begin{aligned}
\cos\theta&\geq\cos{\theta^{(0)}}-\sin{\theta^{(0)}}(\theta-\theta^{(0)})-\frac{1}{2}(\theta-\theta^{(0)})^2, \\
\cos\theta&\leq \cos{\theta^{(0)}}-\sin{\theta^{(0)}}(\theta-\theta^{(0)})+\frac{1}{2}(\theta-\theta^{(0)})^2.
\end{aligned}
\end{displaymath}

\subsubsection{Envelopes for Tangent over a finite domain}
Tangent function over the domain $\theta\in[-\pi/4,\pi/4]$ has Lipschitz constant 4. From Lemma \ref{lemma:envelope_f}, tangent function with $\theta,\theta^{(0)}\in[-\pi/4,\pi/4]$ can be bounded by the following envelopes:
\begin{displaymath}
\begin{aligned}
\tan\theta&\geq\tan{\theta^{(0)}}+\sec^2{\theta^{(0)}}(\theta-\theta^{(0)})-2(\theta-\theta^{(0)})^2, \\
\tan\theta&\leq \tan{\theta^{(0)}}+\sec^2{\theta^{(0)}}(\theta-\theta^{(0)})+2(\theta-\theta^{(0)})^2.
\end{aligned}
\end{displaymath}

\subsection{Proof of Lemma \ref{lemma:g_P_bound}}
\label{apdx:g_P_bound}
\begin{proof}
Since $\partial\mathcal{P}$ is a set of extreme points of $\mathcal{P}$, any point, $\mathbf{x}\in\mathcal{P}$, can be written as a convex combination, $\mathbf{x}=\sum_{\mathbf{x}_i\in\partial\mathcal{P}} \lambda_i\mathbf{x}_i$ where $\lambda_i\geq 0$, $i=1,\ldots,\lvert\mathbf{x}\rvert$ and $\sum_{i=1}^{\lvert\mathbf{x}\rvert}\lambda_i=1$. Since $\boldsymbol{g}^u_k$ is a convex function, for all $\mathbf{w}$,
\begin{equation}\begin{aligned}
&\boldsymbol{g}_k(\mathbf{x},\mathbf{u},\mathbf{w})\leq\boldsymbol{g}^u_k(\mathbf{x},\mathbf{u},\mathbf{w}) \\
&\hskip1em\leq\sum_{\mathbf{x}_i\in\partial\mathcal{P}} \lambda_i\boldsymbol{g}^u_k(\mathbf{x}_i,\mathbf{u},\mathbf{w})\leq\max_{\mathbf{x}_i\in\partial\mathcal{P}}\boldsymbol{g}^u_k(\mathbf{x}_i,\mathbf{u},\mathbf{w}).
\end{aligned}\label{eqn:proof_lemma2}\end{equation}
Similarly, $\partial\mathcal{W}$ contains the extreme points of an outer approximation of $\mathcal{W}$, and for all $\mathbf{x}$, $\boldsymbol{g}_k(\mathbf{x},\mathbf{u},\mathbf{w})\leq\max_{\mathbf{w}_i\in\partial\mathcal{W}}\boldsymbol{g}^u_k(\mathbf{x},\mathbf{u},\mathbf{w}_i)$. 
Condition \eqref{eqn:g_P} ensures that $\boldsymbol{g}^u_{\mathcal{P},k}\geq\max_{\mathbf{w}_i\in\partial\mathcal{W}}\max_{\mathbf{x}_i\in\partial\mathcal{P}}\boldsymbol{g}^u_k(\mathbf{x}_i,\mathbf{u},\mathbf{w}_i)$, and therefore, $\boldsymbol{g}_{k}(\mathbf{x},\mathbf{u},\mathbf{w})\leq \boldsymbol{g}^u_{\mathcal{P},k}$ for all $\mathbf{x}\in\mathcal{P}(\mathbf{x}^u,\mathbf{x}^\ell)$ and $\mathbf{w}\in\mathcal{W}(\gamma)$. Using a similar argument for $\boldsymbol{g}^\ell_k(\mathbf{x},\mathbf{u},\mathbf{w})$, $\boldsymbol{g}_{k}(\mathbf{x},\mathbf{u},\mathbf{w})\geq \boldsymbol{g}^\ell_{\mathcal{P},k}$, which completes our proof.
\end{proof}

\subsection{Proof of Theorem 
\ref{thm:solv}}
\label{apdx:solv}

\begin{proof}
To show that $\mathcal{P}(\mathbf{x}^u,\mathbf{x}^\ell)$ is a self-mapping set with the map, $T_{\mathbf{u},\mathbf{w}}[\mathbf{x}]=K\boldsymbol{g}(\mathbf{x},\mathbf{u},\mathbf{w})+R\mathbf{w}$, substitute $\mathbf{x}=T_{\mathbf{u},\mathbf{w}}[\mathbf{x}]$ to Equation \eqref{eqn:P_tube}. Then, the self-mapping condition requires that for all $\mathbf{x}\in\mathcal{P}(\mathbf{x}^u,\mathbf{x}^\ell)$ and $\mathbf{w}\in\mathcal{W}(\gamma)$, $\mathbf{x}^\ell\leq K\boldsymbol{g}(\mathbf{x},\mathbf{u},\mathbf{w})+R\mathbf{w}\leq\mathbf{x}^u$. Since $\mathcal{P}(\mathbf{x}^u,\mathbf{x}^\ell)$ is an intersection of interval, the self-mapping condition is equivalent to
\begin{displaymath}\begin{aligned}
\max_{\mathbf{w}\in\mathcal{W}(\gamma)} \max_{\mathbf{x}\in\mathcal{P}(\mathbf{x}^u,\mathbf{x}^\ell)} K_i\boldsymbol{g}(\mathbf{x},\mathbf{u},\mathbf{w})+R_i\mathbf{w}\leq\mathbf{x}^u_i \\
\min_{\mathbf{w}\in\mathcal{W}(\gamma)} \min_{\mathbf{x}\in\mathcal{P}(\mathbf{x}^u,\mathbf{x}^\ell)} K_i\boldsymbol{g}(\mathbf{x},\mathbf{u},\mathbf{w})+R_i\mathbf{w}\geq\mathbf{x}^\ell_i
\end{aligned}\end{displaymath}
for $i=1,\ldots,\lvert\mathbf{x}\rvert$.
From condition \eqref{eqn:cvxrs_solv} and Lemma \ref{lemma:g_P_bound}, we can show that
\begin{displaymath}
\begin{aligned}
    &\max_{\mathbf{w}\in\mathcal{W}} \max_{\mathbf{x}\in\mathcal{P}(\mathbf{x}^u,\mathbf{x}^\ell)} (K_i\boldsymbol{g}(\mathbf{x},\mathbf{u},\mathbf{w})+R_i\mathbf{w}) \\
    &\leq\max_{\mathbf{w}\in\mathcal{W}}\max_{\mathbf{x}\in\mathcal{P}(\mathbf{x}^u,\mathbf{x}^\ell)} \left(K_i^+\boldsymbol{g}^u+K_i^-\boldsymbol{g}^\ell\right)(\mathbf{x},\mathbf{u},\mathbf{w})+\max_{\mathbf{w}\in\mathcal{W}}R_i\mathbf{w} \\
    &\leq K_i^+\max_{\substack{\mathbf{x}\in\partial\mathcal{P} \\ \mathbf{w}\in\partial\mathcal{W}}}\boldsymbol{g}^u(\mathbf{x},\mathbf{u},\mathbf{w})+K_i^-\min_{\substack{\mathbf{x}\in\partial\mathcal{P} \\ \mathbf{w}\in\partial\mathcal{W}}}\boldsymbol{g}^\ell(\mathbf{x},\mathbf{u},\mathbf{w})+\xi^u_i(\gamma) \\
    &=K_i^+\boldsymbol{g}^u_\mathcal{P}+K_i^- \boldsymbol{g}^\ell_\mathcal{P}+\xi^u_i(\gamma)\leq \mathbf{x}^u_i,
\end{aligned}
\end{displaymath}
where $\partial\mathcal{P}$ and $\partial\mathcal{W}$ are defined in \eqref{eqn:partialPW}. Therefore, the set $\mathcal{P}(\mathbf{x}^u,\mathbf{x}^\ell)$ satisfies the self-mapping condition in Theorem \ref{thm:Brouwer} for all $\mathbf{w}\in\mathcal{W}(\gamma)$, and there exists $\mathbf{x}\in\mathcal{P}(\mathbf{x}^u,\mathbf{x}^\ell)$ that satisfies $F(\mathbf{x},\mathbf{u},\mathbf{w})$, which corresponds to a state trajectory.
\end{proof}

\subsection{Closed-form Solutions for Optimization Problems over Uncertainty Sets}
\label{apdx:max_uncertainty}

\begin{proof}
For an ellipsoidal uncertainty set $\mathcal{W}^Q(\gamma)$, $\xi_i^u(\gamma)$ in \eqref{eqn:xi} is given by
\begin{displaymath}\begin{aligned}
\xi_i^u(\gamma)&=\max_\mathbf{w}\{R_i\mathbf{w}\mid (\mathbf{w}-\mathbf{w}^{(0)})^T \Sigma^{-1}(\mathbf{w}-\mathbf{w}^{(0)}) \leq \gamma^2\}, \\
&=\max_\mathbf{w}\{R_i\mathbf{w}^{(0)}+\gamma R_i\Sigma^{1/2}\tilde{\mathbf{w}}\mid \lVert\tilde{\mathbf{w}}\rVert_2 \leq 1\}, \\
&=R_i\mathbf{w}^{(0)}+\gamma\lVert R_i\Sigma^{1/2}\rVert_2.
\end{aligned}\end{displaymath}

For interval uncertainty set $\mathcal{W}^I(\gamma)$, $\xi_i^u(\gamma)$ is given by
\begin{displaymath}\begin{aligned}
\xi_i^u(\gamma)&=\max_\mathbf{w}\{R_i\mathbf{w}\mid\lvert \mathbf{w}_i-\mathbf{w}_i^{(0)}\rvert\leq \gamma_i,\; i=1,\ldots,\lvert\mathbf{w}\rvert\}, \\
&=\max_\mathbf{w}\{R_i\mathbf{w}^{(0)}+R_i\tilde{\mathbf{w}}\mid\lvert \tilde{\mathbf{w}}_i\rvert\leq \gamma_i,\; i=1,\ldots,\lvert\mathbf{w}\rvert\}, \\
&=R_i\mathbf{w}^{(0)}+\sum_{k=1}^{\lvert\mathbf{w}\rvert}\lvert R_{ik}\rvert\gamma_k.
\end{aligned}\end{displaymath}
The lower bound $\xi_i^\ell(\gamma)$ can be derived similarly.
\end{proof}

 \subsection{Proof of Lemma 
\ref{lemma:cvxrs_safety}}
\label{apdx:lemma_safety}

\begin{proof}
The necessary and sufficient condition for optimality for the projection problem in \eqref{eqn:c_safety} is
\begin{displaymath}
(P_{\mathcal{B}_{t,i}}[x_t^{(0)}]-x_t^{(0)})^T(\tilde{x}-P_{\mathcal{B}_{t,i}}[x_t^{(0)}])\geq0,\hskip0.5em \forall \tilde{x}\in\mathcal{B}_{t,(i)},
\end{displaymath}
for $i=1,\ldots,s$. The condition, $L_tx_t+d_t<0$, ensures that
\begin{displaymath}
(P_{\mathcal{B}_{t,i}}[x_t^{(0)}]-x_t^{(0)})^T(x_t-P_{\mathcal{B}_{t,i}}[x_t^{(0)}])<0,\ i=1,\ldots,s.
\end{displaymath}
Therefore, $x_t\notin\mathcal{B}_{t,(i)}$ for $i=1,\ldots,s$. Therefore, the state $x_t$ satisfies the safety constraints.
\end{proof}

\subsection{Proof of Lemma \ref{lemma:cost_bound}}
\label{apdx:cost_bound}
\begin{proof}
For all state trajectories $\mathbf{x}\in\mathcal{P}(\mathbf{x}^u,\,\mathbf{x}^\ell)$, the state at each time step can be represented as $x_t = \alpha x_t^u + (1-\alpha) x_t^\ell$ for some scalar $\alpha\in[0,1]$.
Given that the functions $c_{x,t}:\mathbb{R}^n\rightarrow\mathbb{R}$ for $t=1,\ldots,N$ are convex from Equation \eqref{eqn:worst_cost} and the inequalities from \eqref{eqn:cost_cvx}, 
\begin{displaymath}\begin{aligned}
c_{x,t}(x_t) &\leq \alpha c_{x,t}(x_t^u) + (1-\alpha)c_{x,t}(x_t^\ell) \\
&\leq \max \{c_{x,t}(x_t^u), c_{x,t}(x_t^\ell)\} \leq c_{\mathcal{P},t}.
\end{aligned}\end{displaymath}
Then, the conditions in \eqref{eqn:cost_cvx} ensure that
\begin{displaymath}\begin{aligned}
c^u&\geq\sum_{t=0}^{N-1}\left[c_{\mathcal{P},t}+c_{u,t}(u_t)\right]+c_{\mathcal{P},N} \\
&\hskip0em\geq \max_{\mathbf{x}\in\mathcal{P}(\mathbf{x}^u,\,\mathbf{x}^\ell)}\left(\sum_{t=0}^{N-1}\left[c_{x,t}(x_t)+c_{u,t}(u_t)\right]+c_{x,N}(x_N)\right),
\end{aligned}\end{displaymath}
and therefore, $c^u\geq c(\mathbf{x},\mathbf{u})$ for all $\mathbf{x}\in\mathcal{P}(\mathbf{x}^u,\mathbf{x}^\ell)$.
\end{proof}

\bibliographystyle{IEEEtran}
\bibliography{references}

\end{document}